\theoremstyle{plain}
\newtheorem{theorem}{Theorem}
\newtheorem{lemma}{Lemma}
\newtheorem{proposition}{Proposition}
\theoremstyle{definition}
\newtheorem{definition}{Definition}
\theoremstyle{remark}
\newtheorem{remark}{Remark}
\DeclareMathOperator{\dom}{dom}
\DeclareMathOperator{\diag}{diag}
\DeclareMathOperator{\esssup}{esssup}
\DeclareMathOperator{\trace}{Tr}
\DeclareMathOperator{\proj}{Pr}
\author{M.V. Dolgopolik\footnote{Institute for Problems in Mechanical Engineering of the Russian Academy of Sciences,
Saint Petersburg, Russia}}
\title{Exact augmented Lagrangians for constrained optimization problems in Hilbert spaces II: Applications}
\begin{document}

\maketitle

\begin{abstract}
This two-part study is devoted to the analysis of the so-called exact augmented Lagrangians, introduced by Di Pillo and
Grippo for finite dimensional optimization problems, in the case of optimization problems in Hilbert spaces. In the
second part of our study we present applications of the general theory of exact augmented Lagrangians to several
constrained variational problems and optimal control problems, including variational problems with additional
constraints at the boundary, isoperimetric problems, problems with nonholonomic equality constraints (PDE constraints),
and optimal control problems for linear evolution equations. We provide sufficient conditions for augmented Lagrangians
for these problems to be globally/completely exact, that is, conditions under which a constrained variational
problem/optimal control problem becomes equivalent to the problem of unconstrained minimization of the corresponding
exact augmented Lagrangian in primal and dual variables simultaneously. 
%
\end{abstract}

\section{Introduction}





\section{Preliminaries}
\label{sect:Preliminaries}

Let us introduce notation and recall some auxiliary definitions from the first part of our study 
\cite{Dolgopolik_ExAugmLagr} and the theory of Sobolev spaces that will be utilised throughout this article.

\subsection{Exact augmented Lagrangians}
\label{subect:ExactAugmLagr}

Let $X$ and $H$ be real Hilbert spaces, $\langle \cdot, \cdot \rangle$ be the inner product in $X$, $H$ or
$\mathbb{R}^n$, depending on the context, and $|\cdot|$ be the Euclidean norm. Let also functions 
$f, g_i \colon X \to \mathbb{R}$ with $i \in M := \{ 1, \ldots, m \}$ and $F \colon X \to H$ be given functions. 
Denote $g(\cdot) = (g_1(\cdot), \ldots, g_m(\cdot))$. 

The classical Lagrangian for the constrained optimization problem
\[
  \min \enspace f(x) \quad \text{subject to} \quad F(x) = 0, \quad g_i(x) \le 0, \quad i \in M
  \qquad \eqno{(\mathcal{P})}
\]
is defined as 
\[
  L(x, \lambda, \mu) = f(x) + \langle \lambda, F(x) \rangle + \langle \mu, g(x) \rangle, 
  \quad \lambda \in H, \quad \mu \in \mathbb{R}^m.
\]
To define an exact augmented Lagrangian for the problem $(\mathcal{P})$, choose a convex non-decreasing lower
semicontinuous (l.s.c.) function $\phi \colon [0, +\infty) \to [0, + \infty]$ such that $\phi(t) = 0$ if and only if 
$t = 0$ and $\dom \phi \ne \{ 0 \}$. Choose also a continuously differentiable concave function 
$\psi \colon [0, + \infty)^m \to \mathbb{R}$ such that zero is a point of global maximum of $\psi$ and $\psi(0) > 0$ 
(some additional assumptions on these function as well as several particular examples can be found in 
\cite{Dolgopolik_ExAugmLagr}).

For any vectors $y, z \in \mathbb{R}^m$ denote by $\max\{ y, z \} \in \mathbb{R}^m$ the coordinate-wise maximum of the
vectors $y$ and $z$. Introduce the functions
\[
  b(x) = \psi\big( \max\{ g(x), 0 \} \big), \quad p(x, \mu) = \frac{b(x)}{1 + |\mu|^2} 
  \quad \forall x \in X, \: \mu \in \mathbb{R}^m
\]
and denote $\Omega = \{ x \in X \mid b(x) > 0, \: \phi(\| F(x) \|^2) < + \infty \}$. 

Let the functions $f$, $F$, and $g_i$, $i \in M$, be continuously Fr\'{e}chet differentiable, $D F(x)[\cdot]: X \to H$ 
be the Fr\'{e}chet derivative of $F$ at $x$, and $\nabla_x L(x, \lambda, \mu)$ be the gradient of the function 
$x \mapsto L(x, \lambda, \mu)$. The exact augmented Lagrangian for the problem $(\mathcal{P})$ is defined as follows:
\begin{multline} \label{eq:ExactAugmLagrDef}
  \mathscr{L}(x, \lambda, \mu, c) 
  = f(x) + \langle \lambda, F(x) \rangle + \frac{c}{2} \big( 1 + \| \lambda \|^2 \big) \phi(\| F(x) \|^2)
  \\
  + \Big\langle \mu, \max\Big\{ g(x), - \frac{1}{c} p(x, \mu) \mu \Big\} \Big\rangle
  + \frac{c}{2 p(x, \mu)} \Big| \max\Big\{ g(x), - \frac{1}{c} p(x, \mu) \mu \Big\} \Big|^2
  \\
  + \eta(x, \lambda, \mu),
\end{multline}
if $x \in \Omega$, and $\mathscr{L}(x, \lambda, \mu, c) = + \infty$, otherwise. Here $\lambda \in H$ and 
$\mu \in \mathbb{R}^m$ are Lagrange multipliers, $c > 0$ is the penalty parameter, and
\begin{equation} \label{eq:KKTPenTerm}
\begin{split}
  \eta(x, \lambda, \mu) = &\frac{1}{2} \Big\| D F(x)\big[ \nabla_x L(x, \lambda, \mu) \big] \Big\|^2
  \\
  &+ \frac{1}{2} \sum_{i = 1}^m 
  \Big( \langle \nabla g_i(x), \nabla_x L(x, \lambda, \mu) \rangle + g_i(x)^2 \mu_i \Big)^2.
\end{split}
\end{equation}
For the sake of simplicity, below we will consider only the case $\phi(t) \equiv t$ and $\psi(y) \equiv 1$, although
optimization methods based on exact augmented Lagrangians for problems presented in the article might perform better for
a different choice of these functions.

For any $\gamma \in \mathbb{R}$ and $c > 0$ introduce sublevel sets
\begin{align} \notag
  S_c(\gamma) &= \Big\{ (x, \lambda, \mu) \in \Omega \times H \times \mathbb{R}^m 
  \Bigm| \mathscr{L}(x, \lambda, \mu, c) \le \gamma \Big\},
  \\ \label{eq:PenFuncSublevelSet}
  \Omega_c(\gamma) 
  &= \Big\{ x \in \Omega \Bigm| f(x) + c\big( \| F(x) \|^2 + |\max\{ g(x), 0 \}^2| \big) \le \gamma \Big\}.
\end{align}
Let us recall the definition of global exactness of the augmented Lagrangian. Suppose that there exists 
a globally optimal solution of the problem $(\mathcal{P})$. Recall that a triplet 
$(x, \lambda, \mu) \in X \times H \times \mathbb{R}^m$ is called a KKT point of the problem $(\mathcal{P})$, if $x$ is
feasible for this problem, $\nabla_x L(x, \lambda, \mu) = 0$, and for all $i \in M$ one has $\mu_i g_i(x) = 0$ and 
$\mu_i \ge 0$. Suppose that for any globally optimal solution $x_*$ of the problem $(\mathcal{P})$ there exist Lagrange
multipliers $\lambda_* \in H$ and $\mu_* \in \mathbb{R}^m$ such that the triplet $(x_*, \lambda_*, \mu_*)$ is a KKT
point of the problem $(\mathcal{P})$.

\begin{definition}
The augmented Lagrangian $\mathscr{L}(x, \lambda, \mu, c)$ is called \textit{globally exact}, if there exists $c_* > 0$
such that for all $c \ge c_*$ a triplet $(x_*, \lambda_*, \mu_*)$ is a globally optimal solution of the problem
\begin{equation} \label{prob:ExAugmLagr}
  \min_{(x, \lambda, \mu) \in X \times H \times \mathbb{R}^m} \mathscr{L}(x, \lambda, \mu, c)
\end{equation}
if and only if $x_*$ is a globally optimal solution of the problem $(\mathcal{P})$ and $(x_*, \lambda_*, \mu_*)$ is a
KKT point of this problem.
\end{definition}

\begin{remark}
It should be noted that under some natural assumptions the augmented Lagrangian $\mathscr{L}(x, \lambda, \mu, c)$ is
globally exact if and only if there exists $c_*$ such that for any $c \ge c_*$ the optimal value of problem
\eqref{prob:ExAugmLagr} is equal to the optimal value of the problem $(\mathcal{P})$
(see \cite[Lemma~5.2]{Dolgopolik_ExAugmLagr}).
\end{remark}

Sufficient conditions for the global exactness of the augmented Lagrangian $\mathscr{L}(x, \lambda, \mu, c)$ are
expressed in terms of the function
\begin{equation} \label{eq:Q_def}
\begin{split}
  Q(x)[\lambda, \mu] = 
  &\frac{1}{2} \Big\| DF(x) \Big[ DF(x)^*[\lambda] + \sum_{i = 1}^m \mu_i \nabla g_i(x) \Big] \Big\|^2
  \\
  &+ \frac{1}{2} 
  \Big| \nabla g(x) \Big( DF(x)^*[\lambda] + \sum_{i = 1}^m \mu_i \nabla g_i(x) \Big) + \diag(g_i(x)^2) \mu \Big|^2
\end{split}
\end{equation}
where $DF(x)^*[\cdot]$ is the adjoint operator of the linear operator $DF(x)[\cdot]$ and 
$\nabla g(x) y \in \mathbb{R}^m$ is the vector whose $i$-th coordinate is $\langle \nabla g_i(x), y \rangle$ for any 
$y \in X$. Note that the function $Q(x)[\cdot]$ is quadratic in $(\lambda, \mu)$.

\begin{definition} \label{def:Q_PositiveDef}
The function $Q(x)[\cdot]$ is called \textit{positive definite} at a point $x \in X$, if there exists $\sigma > 0$ such
that
\[
  Q(x)[\lambda, \mu] \ge \sigma \big( \|\lambda\|^2 + |\mu|^2 \big) 
  \quad \forall \lambda \in H, \: \mu \in \mathbb{R}^m
\]
The supremum of all those $\sigma \ge 0$ for which the inequality above holds true is denoted by $\sigma_{\max}(Q(x))$.
\end{definition}

Sufficient conditions for the global exactness and the stronger propery of the so-called \textit{complete} exactness of
the augmented Lagrangian $\mathscr{L}(x, \lambda, \mu, c)$ obtained in the first part of our study 
\cite{Dolgopolik_ExAugmLagr} heavily rely on the assumption of weak sequential lower semicontinuous of the
function $\mathscr{L}(\cdot, c)$ and the boundedness of of the sublevel set $S_c(\gamma)$ for a properly chosen value
of $\gamma$. These assumptions are needed to ensure that problem \eqref{prob:ExAugmLagr} has globally optimal solutions
for any sufficiently large value of the penalty parameter $c > 0$. However, as we will see below, in some applications
these assumptions are very difficult (if at all possible) to verify, which renders the results from
\cite{Dolgopolik_ExAugmLagr} inapplicable. To overcome this difficulty, below we present sufficient conditions for the
global and complete exactness of the augmented Lagrangian $\mathscr{L}(x, \lambda, \mu, c)$ that do not rely on lower
semicontinuity or boundedness assumptions. These conditions were inspired by the results of Demyanov
\cite{Demyanov2003,Demyanov2004,Demyanov2005,DemyanovGiannessi2003,DemyanovTamasyan2011,DemyanovTamasyan2014} on exact
penalty functions for optimization problems in infinite dimensional space, in which the assumption on the existence of
global minimizers of the penalty function is stated explicitly.

\begin{theorem} \label{THM:COMPLETE_EXACTNESS}
Let $f_* > - \infty$ be the optimal value of the problem $(\mathcal{P})$ and the following assumptions be valid:
\begin{enumerate}
\item{$f$, $g_i$, $i \in M$, and $F$ are twice continuously Fr\'{e}chet differentiable on $\Omega$,
$\phi$ is continuously differentiable on its effective domain, and $\phi'(0) > 0$;}

\item{there exist $r > 0$, $\varepsilon > 0$ and $\gamma > f_*$ such that the functions $f$, $g_i$, $i \in M$, and $F$,
as well as their first and second order Fr\'{e}chet derivatives, are bounded on $\Omega_r(\gamma + \varepsilon)$;}

\item{there exists $\sigma > 0$ such that $\sigma_{\max}(Q(x)) \ge \sigma$ for all 
$x \in \Omega_r(\gamma + \varepsilon)$;}

\item{there exists an unbounded increasing sequence $\{ c_n \}$ such that for any $n \in \mathbb{N}$ the function
$\mathscr{L}(\cdot, c_n)$ attains a global minimum on $X \times H \times \mathbb{R}^m$.}
\end{enumerate}
Then there exists $c_* > 0$ such that for all $c \ge c_*$ the augmented Lagrangian $\mathscr{L}(x, \lambda, \mu, c)$ is
completely exact on the set $S_c(\gamma)$ in the following sense:
\begin{enumerate}
\item{the optimal values of the problem $(\mathcal{P})$ and problem \eqref{prob:ExAugmLagr} coincide;
}

\item{$(x_*, \lambda_*, \mu_*)$ is point of global minimum of $\mathscr{L}(x, \lambda, \mu, c)$ if and only if 
$x_*$ is a globally optimal solution of the problem $(\mathcal{P})$ and $(x_*, \lambda_*, \mu_*)$ is a KKT-point 
of this problem;
}

\item{$(x_*, \lambda_*, \mu_*) \in S_c(\gamma)$ is a stationary point of $\mathscr{L}(x, \lambda, \mu, c)$ if and only
if $(x_*, \lambda_*, \mu_*)$ is a KKT-point of the problem $(\mathcal{P})$ and $f(x_*) \le \gamma$;
}

\item{if $(x_*, \lambda_*, \mu_*) \in S_c(\gamma)$ is a point of local minimum of $\mathscr{L}(x, \lambda, \mu, c)$, 
then $x_*$ is a locally optimal solution of the problem $(\mathcal{P})$, $f(x_*) \le \gamma$, and 
$(x_*, \lambda_*, \mu_*)$ is a KKT-point of this problem.
}
\end{enumerate}
\end{theorem}

The proof of this theorem is given in the appendix.

\begin{remark} \label{rmrk:BasicExistenceAssumption}
Throughout this article we implicitly assume that all optimization problems under consideration have finite optimal
value and a globally optimal solution (except for the problems involving the augmented Lagrangian). Furthermore, we
also suppose that for any globally optimal solution of these problems there exist Lagrange multiplier, for which the
KKT optimality conditions hold true.
\end{remark}

\subsection{$L^p$ and Sobolev spaces}
\label{subsect:SobolevSpaces}

Let $\mathscr{U}$ be a Hilbert space and $E \subset \mathbb{R}^n$ be an open set. For any $1 \le p \le \infty$ we denote
by $L^p(E; \mathscr{U})$ the space consisting of all those measurable functions $u \colon E \to \mathscr{U}$, for which 
\begin{align*}
  \| u \|_{L^p(E; \mathscr{U})} 
  &:= \left( \int_E \| u(x) \|_{\mathscr{U}}^p \, dx \right)^{\frac{1}{p}} < + \infty, 
  \quad \text{ if } 1 \le p < \infty, 
  \\
  \| u \|_{L^{\infty}(E; \mathscr{U})} &:= \esssup_{x \in E} \| u(x) \|_{\mathscr{U}} < + \infty,
\end{align*}
endowed with the norm $\| \cdot \|_{L^p(E; \mathscr{U})}$. If $\mathscr{U}$ is the space $\mathbb{R}^d$ equipped
with the Euclidean norm $|\cdot|$, then we denote the corresponding norm on $L^p(E; \mathbb{R}^d)$ by $\| \cdot \|_p$.

Let $W^{s, p}(E)$ be the Sobolev space, while $W^{s, p}(E; \mathbb{R}^d)$ be the space consisting of all
those vector-valued functions $u \colon E \to \mathbb{R}^d$, $u = (u_1, \ldots, u_d)$, for which 
$u_i \in W^{s, p}(E)$ for all $i \in \{1, \ldots, d\}$. The space $W^{1, 2}(E; \mathbb{R}^d)$ is equipped with
the inner product
\[
  \langle u, v \rangle 
  = \int_E \big( \langle u(x), v(x) \rangle + \langle \nabla u(x), \nabla v(x) \rangle \big) \, dx 
  \quad \forall u, v \in W^{1, 2}(E; \mathbb{R}^d).
\]
and the corresponding norm. The closure of the space $C^{\infty}_0(E; \mathbb{R}^d)$ of all infinitely
continuously differentiable functions $u \colon E \to \mathbb{R}^d$ with compact support in the space 
$W^{1, 2}(E; \mathbb{R}^d)$ is denoted by $W^{1, 2}_0(E; \mathbb{R}^d)$.

As is well known (see, e.g. \cite{Leoni}), in the case when $E = (a, b) \subset \mathbb{R}$ the space 
$W^{1, 2}((a, b); \mathbb{R}^d)$ is isomorphic to the space of all absolutely continuous functions 
$u \colon [a, b] \to \mathbb{R}^d$ such that $u' \in L^2((a, b); \mathbb{R}^d)$, which we denote by 
$W^{1, 2}([a, b]; \mathbb{R}^d)$.

We will also use a Sobolev-like function space introduced in \cite{Dolgopolik_MultidimCalcVar}. Let us recall 
the definition of this space. Any $n$-tuple $\alpha = (\alpha_1, \ldots, \alpha_n) \in \mathbb{Z}_+^n$ of nonnegative
integers $\alpha_i$ is called a \textit{multi-index}. Its absolute value is defined as 
$|\alpha| = \alpha_1 + \ldots + \alpha_n$. For any multi-index $\alpha$ denote by 
$D^{\alpha} = D_1^{\alpha_1} \ldots D^{\alpha_n}_n$ a differential operator of order $|\alpha|$, where 
$D_i = \partial/\partial x_i$. If $\alpha = 0$, then $D^{\alpha} u = u$ for any function $u$. 
For any $k \in \{ 0, \ldots, n \}$ define
\[
  I_k = \Big\{ \alpha \in \mathbb{Z}^n_+ \Bigm| |\alpha| = k, \enspace \alpha_i = 0 \text{ or } \alpha_i = 1
  \enspace \forall i \in \{ 1, \ldots, n \} \Big\}.
\]
Finally, denote by $MW^{n, 2}(E)$ the set of all function $u \in L^2(E)$ such that for any 
$k \in \{ 1, \ldots, n \}$ and $\alpha \in I_k$ there exists the weak derivative $D^{\alpha} u$ of $u$ that belongs to
$L^2(E)$. Thus, $MW^{n, 2}$ consists of all those functions $u \in L^2(E)$ for which there exist all weak
mixed derivatives of the order $k \in \{ 1, \ldots, n \}$ that belong to $L^2(E)$. Note that 
$W^{n, 2}(E) \subseteq MW^{n, 2}(E)$ and this inclusion turns into an equality in the case $n = 1$.

The linear space $MW^{n, 2}(E)$ equipped with the following inner product 
\[
  \langle u, v \rangle = \sum_{k = 0}^n \sum_{\alpha \in I_k} \int_E D^{\alpha} u(x) D^{\alpha} v(x) \, dx
  \quad \forall u, v \in MW^{n, 2}(E)
\]
and the corresponding norm $\| \cdot; MW^{n, 2} \|$ is a separable Hilbert space. The closure of $C_0^{\infty}(E)$
in $MW^{n, 2}(E)$ is denoted by $MW_0^{n, 2}(E)$. By \cite[Thm.~2]{Dolgopolik_MultidimCalcVar} the seminorm on
$MW_0^{n, 2}(E)$ corresponding to the inner product
\[
  \langle u, v \rangle = \int_E D^{(1, \ldots, 1)} u(x) D^{(1, \ldots, 1)} v(x) \, dx
  \quad \forall u, v \in MW_0^{n, 2}(E)
\]
is a norm that is equivalent to the norm $\| \cdot; MW^{n, 2} \|$. Therefore, below we suppose that the space 
$MW_0^{n, 2}(E)$ is endowed with this inner product and the corresponding norm.

Functions from the space $MW_0^{n, 2}(E)$ admit a very simple and convenient characterisation in the case when
$E = \prod_{i = 1}^n (a_i, b_i)$ is a bounded open box in $\mathbb{R}^n$. In this case for any $v \in L^2(E)$ denote
\begin{equation} \label{eq:MultidimIntegralOper}
  (\mathcal{A} v)(x) = \int_{a_1}^{x_1} \ldots \int_{a_n}^{x_n} v(s) \, ds \quad \text{for a.e. } x \in E.
\end{equation}
By the Fubini theorem the function $\mathcal{A} v$ is correctly defined and $\mathcal{A}$ is a continuous linear
operator mapping $L^2(E)$ to $L^2(E)$. The following result holds true (see
\cite[Thm.~3]{Dolgopolik_MultidimCalcVar}).

\begin{theorem} \label{thm:MixedSobolev}
Let $E = \prod_{i = 1}^n (a_i, b_i)$. Then a function $u \colon E \to \mathbb{R}$ belongs to $MW^{n, 2}_0(E)$ if and
only if there exist a function $v \in L^2(E)$ such that:
\begin{enumerate}
\item{$u(x) = (\mathcal{A} v)(x)$ for a.e. $x \in E$;}

\item{$\int_{a_i}^{b_i} v(x_1, \ldots, x_{i - 1}, s_i, x_{i + 1}, \ldots, x_n) \, ds_i = 0$ for a.e. $x \in E$ and
for all $i \in \{ 1, \ldots, n \}$.}
\end{enumerate}
Moreover, such function $v$ is uniquely defined and $v = D^{(1, \ldots, 1)} u$ in the weak sense.
\end{theorem}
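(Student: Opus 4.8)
The plan is to prove the two implications separately, using an induction on the dimension $n$ together with the one-dimensional fundamental theorem of calculus for absolutely continuous functions, and to control everything in $L^2$ via the Fubini theorem and the continuity of $\mathcal A$ recorded just before the statement.

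For the ``only if'' direction I would first treat $u \in C^\infty_0(\Omega)$: for such $u$ the classical iterated fundamental theorem of calculus gives $u(x) = \int_{a_1}^{x_1}\!\cdots\!\int_{a_n}^{x_n} D^{(1,\dots,1)}u(s)\,ds$, i.e.\ $u = \mathcal A v$ with $v = D^{(1,\dots,1)}u$, and condition~(2) follows because integrating $D_i$ of a compactly supported smooth function across the full interval $(a_i,b_i)$ yields zero (the boundary terms vanish). Then I would pass to general $u \in MW^{n,2}_0(\Omega)$ by approximation: take $u_k \in C^\infty_0(\Omega)$ with $u_k \to u$ in $MW^{n,2}$; since on $MW^{n,2}_0$ the norm is equivalent to $\|D^{(1,\dots,1)}\cdot\|_{L^2}$, the sequence $v_k := D^{(1,\dots,1)}u_k$ is Cauchy in $L^2(\Omega)$, hence $v_k \to v$ for some $v \in L^2(\Omega)$ with $v = D^{(1,\dots,1)}u$ weakly. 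Continuity of $\mathcal A$ gives $\mathcal A v_k \to \mathcal A v$ in $L^2$, while $\mathcal A v_k = u_k \to u$, so $u = \mathcal A v$ a.e. For condition~(2) I would note that the map $v \mapsto \big(x \mapsto \int_{a_i}^{b_i} v(x_1,\dots,s_i,\dots,x_n)\,ds_i\big)$ is, by Fubini and Cauchy--Schwarz, continuous from $L^2(\Omega)$ into $L^2$ of the remaining variables, it vanishes on each $v_k$, and therefore vanishes on the limit $v$.

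For the ``if'' direction, suppose $v \in L^2(\Omega)$ satisfies (1) and (2) and set $u = \mathcal A v$. I would show by a direct integration-by-parts/Fubini argument that for every $k \in \{1,\dots,n\}$ and every $\alpha \in I_k$ the weak derivative $D^\alpha u$ exists and equals the corresponding partial integral of $v$ over the variables \emph{not} appearing in $\alpha$ (in particular $D^{(1,\dots,1)}u = v$), which lies in $L^2(\Omega)$ by Fubini and Cauchy--Schwarz; hence $u \in MW^{n,2}(\Omega)$. To prove $u \in MW^{n,2}_0(\Omega)$, i.e.\ that $u$ lies in the $MW^{n,2}$-closure of $C_0^\infty(\Omega)$, I would mollify: approximate $v$ by smooth $v_k$ and correct them so that the partial-integral conditions (2) hold exactly for $v_k$ (subtract, in each variable successively, the appropriately averaged tail — a finite sequence of rank-one corrections, each small in $L^2$), then set $u_k = \mathcal A v_k$; one checks $u_k \in C_0^\infty(\Omega)$ precisely because of (2), and $u_k \to u$ in the $\|D^{(1,\dots,1)}\cdot\|_{L^2}$-norm, which is equivalent to $\|\cdot;MW^{n,2}\|$ on $MW^{n,2}_0$. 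Finally, uniqueness of $v$ is immediate: if $\mathcal A v_1 = \mathcal A v_2$ a.e.\ then differentiating (weakly) $n$ times in distinct directions gives $v_1 = v_2$; and $v = D^{(1,\dots,1)}u$ has already been identified along the way.

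I expect the main obstacle to be the ``if'' direction's construction of the smooth compactly supported approximants $u_k$: one must simultaneously (a) keep the correction terms that enforce condition~(2) small in the relevant norm, (b) ensure the corrected smooth functions still vanish near $\partial\Omega$ so that $u_k = \mathcal A v_k \in C_0^\infty(\Omega)$, and (c) verify convergence in the $MW^{n,2}$ norm — the bookkeeping of the $n$ successive one-variable corrections, each preserving the conditions already imposed, is the delicate part. (This is, however, essentially the content of \cite[Thm.~3]{Dolgopolik_MultidimCalcVar}, which may be invoked directly.)
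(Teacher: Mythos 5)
A preliminary but important point: this paper does not prove the theorem at all. It is quoted verbatim from the author's earlier work, with the proof delegated to \cite[Thm.~3]{Dolgopolik_MultidimCalcVar} (``The following result holds true (see \dots)''), so there is no in-paper argument to compare yours against; your proposal has to be judged on its own merits.

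On those merits the plan is essentially sound. The ``only if'' direction is routine exactly as you lay it out: iterated fundamental theorem of calculus on $C_0^\infty(\Omega)$, the norm equivalence on $MW^{n,2}_0(\Omega)$ to make $v_k = D^{(1,\ldots,1)}u_k$ Cauchy in $L^2(\Omega)$, continuity of $\mathcal{A}$, and a Cauchy--Schwarz bound showing the partial-integral conditions in item~(2) pass to the $L^2$-limit. In the ``if'' direction your Fubini computation gives $D^{\alpha}u = \mathcal{A}_{\alpha^c} v \in L^2(\Omega)$ for every $\alpha \in I_k$ \emph{without} using condition~(2), so $\mathcal{A}v \in MW^{n,2}(\Omega)$ for every $v \in L^2(\Omega)$ and condition~(2) is needed only for membership in the closure of $C_0^\infty(\Omega)$ --- your plan implicitly has this division of labour right. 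The one place where the proposal, as written, would fail is the correction step: if you enforce~(2) on the mollified approximants by subtracting the averages $(b_i-a_i)^{-1}\int_{a_i}^{b_i}v_k\,ds_i$ (i.e.\ by applying the orthogonal projection onto the subspace $X$ of \eqref{eq:DerivSpace_MultiDim}, whose explicit form $\proj_X$ is recorded in the proposition after the theorem), the correction terms are constant in the integrated-out variables and therefore destroy compact support, so $\mathcal{A}v_k \notin C_0^\infty(\Omega)$. The repair is to weight each one-variable correction by a fixed bump: choose $\rho_i \in C_0^\infty(a_i,b_i)$ with $\int_{a_i}^{b_i}\rho_i = 1$, set $T_i w = w - \rho_i(x_i)\int_{a_i}^{b_i}w\,ds_i$, and apply $\Pi = T_1\cdots T_n$. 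Each $T_i$ preserves compact support and smoothness, a Fubini argument shows $T_j$ preserves the condition $\int_{a_i}^{b_i}(\cdot)\,ds_i=0$ already imposed for $i\neq j$, $\Pi$ is bounded on $L^2(\Omega)$ and is the identity on $X$, and for $v_k \in C_0^\infty(\Omega)\cap X$ the vanishing partial integrals make $\mathcal{A}v_k$ vanish near the upper faces $x_i = b_i$ as well as the lower ones, so $\mathcal{A}v_k \in C_0^\infty(\Omega)$. With that substitution (bumps in place of constants) your bookkeeping concerns in (a)--(c) resolve, and the uniqueness claim is immediate as you say.
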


\section{Applications to the calculus of variations}
\label{sect:CalculusOfVariations}

In this section, we consider applications of the theory of exact augmented Lagrangians to constrained problems of the
calculus of variations. Namely, we present new exact augmented Lagrangians for variational problems with additional
constraints at the boundary, problems with isoperimetric constraints, and variational problems with nonholonomic
equality constraints. We provide sufficient conditions on the problem data ensuring global/complete exactness of these
augmented Lagrangians. For the sake of simplicity we consider each type of constraints separately, although one can
define an exact augmented Lagrangian for variational problems involving all types of the aforementioned constraints
and with the use of the results presented below derive sufficient conditions for the global/complete exactness of such
augmented Lagrangian.

\subsection{Problems with additional constraints at the boundary}
\label{subsect:BoundaryConstraints}

Consider the following variational problems with additional nonlinear equality and inequality constraints at
the boundary of the domain $(a, b) \subset \mathbb{R}$:
\begin{equation} \label{prob:BoundaryConstraints}
\begin{split}
  &\min_{u \in W^{1, 2}([a, b]; \mathbb{R}^d)} \enspace 
  \mathcal{I}(u) = \int_a^b f(u(x), u'(x), x) \, dx + f_0(u(a), u(b))
  \\
  &\text{subject to} \enspace f_j(u(a), u(b)) = 0, \enspace j \in J, \enspace g_i(u(a), u(b)) \le 0, \enspace i \in M.
\end{split}
\end{equation}
Here $f \colon \mathbb{R}^d \times \mathbb{R}^d \times [a, b] \to \mathbb{R}$, $f = f(u, \xi, x)$, 
$f_j \colon \mathbb{R}^{2d} \to \mathbb{R}$, and $g_i \colon \mathbb{R}^{2d} \to \mathbb{R}$ are given function, and 
$J = \{ 1, \ldots, \ell \}$ and $M = \{ 1, \ldots, m \}$ are finite index sets any one of which can be empty. We suppose
that the functions $f_j$ and $g_i$ are continuously differentiable, while $f$ is a Carath\'{e}odoary function (i.e. the
function $f(u, \xi, \cdot)$ is measurable for all $u, \xi \in \mathbb{R}^d$ and the function $f(\cdot, x)$ is continuous
for a.e. $x \in [a, b])$ that is differentiable in $u$ and $\xi$ and the gradients $\nabla_u f$ and $\nabla_{\xi} f$ are
Carath\'{e}odory functions as well.

We also impose the following growth conditions on $f$ and its partial derivatives. Nanely, for any $R > 0$ there exist
$C_R > 0$ and a.e. nonnegative functions $\eta_R \in L^1(a, b)$ and $\theta_R \in L^2(a, b)$ such that
\begin{equation} \label{eq:1dimGrowthCond}
\begin{split}
  &|f(u, \xi, x)| \le C_R |\xi|^2 + \eta_R(x), \quad |\nabla_u f(u, \xi, x)| \le C_R |\xi|^2 + \eta_R(x), \quad
  \\
  & |\nabla_{\xi} f(u, \xi, x)| \le C_R |\xi| + \theta_R(x).
\end{split}
\end{equation}
for a.e. $x \in [a, b]$ and any $u, \xi \in \mathbb{R}^d$ such that $|u| \le R$.  These growth conditions ensure that
for any $u \in W^{1, 2}([a, b]; \mathbb{R}^d)$ the value $\mathcal{I}(u)$ is correctly defined and finite, the
functional $\mathcal{I}$ is G\^{a}teaux differentiable and its G\^{a}teaux derivative $\mathcal{I}'(u)[\cdot]$ has the
form
\begin{multline} \label{eq:1dimVarFuncDeriv}
  \mathcal{I}'(u)[w] = \int_a^b \big( \langle \nabla_u f(u(x), u'(x), x), w(x) \rangle 
  + \langle \nabla_{\xi} f(u(x), u'(x), x), w'(x) \rangle \big) \, dx 
  \\
  + \langle \nabla f_0(u(a), u(b)), (w(a), w(b)) \rangle
\end{multline}
for all $w \in W^{1, 2}([a, b]; \mathbb{R}^d)$ (see, e.g. \cite[Section~4.3]{Dacorogna}). Furthermore, it is easily seen
that
\begin{align*}
  \big| \mathcal{I}'(u)[w] - \mathcal{I}'(v)[w] \big| 
  &\le \big\| \nabla_u f(u(\cdot), u'(\cdot), \cdot) - \nabla_u f(v(\cdot), v'(\cdot), \cdot) \big\|_1 \| w \|_{\infty}
  \\
  &+ \big\| \nabla_{\xi} f(u(\cdot), u'(\cdot), \cdot) - \nabla_{\xi} f(v(\cdot), v'(\cdot), \cdot) \big\|_2 \| w' \|_2
  \\
  &+ \big| \nabla f_0(u(a), u(b)) - \nabla f_0(v(a), v(b)) \big| \| w \|_{\infty}
\end{align*}
for any $u, v, w \in W^{1, 2}([a, b]; \mathbb{R}^d)$. Hence bearing in mind the fact that 
$\| \cdot \|_{\infty} \le C \| \cdot \|_{1, 2}$ for some $C > 0$ (this result follows from the Sobolev imbedding theorem
\cite[Thm.~5.4]{Adams}) one gets that
\begin{align*}
  \| \mathcal{I}'(u) - \mathcal{I}'(v) \| 
  &\le C \big\| \nabla_u f(u(\cdot), u'(\cdot), \cdot) - \nabla_u f(v(\cdot), v'(\cdot), \cdot) \big\|_1
  \\
  &+ \big\| \nabla_{\xi} f(u(\cdot), u'(\cdot), \cdot) - \nabla_{\xi} f(v(\cdot), v'(\cdot), \cdot) \big\|_2
  \\
  &+ C \big| \nabla f_0(u(a), u(b)) - \nabla f_0(v(a), v(b)) \big|.
\end{align*}
With the use of this inequality and standard results on the continuity of Nemytskii operators (see, e.g.
\cite{AppellZabrejko}) one can conclude that the G\^{a}teaux derivative of $\mathcal{I}$ is continuous, which implies
that this functional is continuously Fr\'{e}chet differentiable.

In order to define an exact augmented Lagrangian for problem \eqref{prob:BoundaryConstraints}, we need to compute the
gradient of the objective function (the functional $\mathcal{I}$) and all constraints of this problem in the space
$W^{1, 2}([a, b], \mathbb{R}^d)$. To simplify this problem, we will use a trick (a change of variables) called
\textit{transition into the space of derivatives}, that was widely utilised by Demyanov in his works 
on the calculus of variations
\cite{Demyanov2003,Demyanov2004,Demyanov2005,DemyanovGiannessi2003,DemyanovTamasyan2011,DemyanovTamasyan2014}.

For any $y \in \mathbb{R}^d$ and $v \in L^2((a, b); \mathbb{R}^d)$ denote 
$\mathcal{A}(y, v)(x) = y + \int_a^x v(s) \, ds$ for all $x \in [a, b]$. The linear operator $\mathcal{A}$ continuously
maps $\mathbb{R}^d \times L^2((a, b); \mathbb{R}^d)$ to $W^{1, 2}([a, b]; \mathbb{R}^d)$. Furthermore, the Lebesgue
differentiation theorem implies that $\mathcal{A}$ is a one-to-one correspondence between these spaces with the inverse
operator $\mathcal{A}^{-1}(u) = (u(a), u'(\cdot))$ (see, e.g. \cite{Leoni}). Therefore, by applying the change of
variables $u = \mathcal{A}(y, v)$ we can convert problem \eqref{prob:BoundaryConstraints} into an equivalent variational
problem of the form:
\begin{equation} \label{prob:BoundaryConstr_DerivSpace}
\begin{split}
  &\min_{(y, v) \in \mathbb{R}^d \times L^2((a, b); \mathbb{R}^d)} \: 
  \widehat{\mathcal{I}}(y, v) := \mathcal{I}(\mathcal{A}(y, v))
  \\
  &\text{subject to} \enspace \widehat{f}_j(y, v) = 0, \enspace j \in J, 
  \enspace  \widehat{g}_i(y, v) \le 0, \enspace i \in M.
\end{split}
\end{equation}
where $\widehat{f}_j(y, v) := f_j(y, y + \int_a^b v(s) \, ds)$ and
$\widehat{g}_i(y, v) := g_i(y, y + \int_a^b v(s) \, ds)$. We will define an exact augmented Lagrangian for 
problem \eqref{prob:BoundaryConstr_DerivSpace} formulated in ``the space of derivatives''. 

Denote $X = \mathbb{R}^d \times L^2((a, b); \mathbb{R}^d)$ and endow the space $X$ with the inner product
\[
  \langle (y, v), (z, w) \rangle = \langle y, z \rangle + \int_a^b \langle v(x), w(x) \rangle \, dx
  \quad \forall (y, v), (z, w) \in X
\]
and the corresponding norm. Fix some $(y, v) \in X$ and for the sake of convenience denote $u = \mathcal{A}(y, v)$. From
equality \eqref{eq:1dimVarFuncDeriv} it follows that the functional $\widehat{\mathcal{I}}$ is continuously Fr\'{e}chet
differentiable and its Fr\'{e}chet derivative $D \widehat{\mathcal{I}}(y, v)[\cdot]$ has the form
\begin{multline*}
  D \widehat{\mathcal{I}}(y, v)[z, w] 
  \\
  = \int_a^b \Big( \Big\langle \nabla_u f(u(x), v(x), x), z + \int_a^x w(s) \, ds \Big\rangle 
  + \langle \nabla_{\xi} f(u(x), v(x), x), w(x) \rangle \Big) \, dx
  \\
  + \langle \nabla_{u_a} f_0(y, u(b)) + \nabla_{u_b} f_0(y, u(b)), z \rangle 
  + \Big\langle \nabla_{u_b} f_0(y, u(b)), \int_a^b v(x) \, dx \Big\rangle
\end{multline*}
for any $(z, w) \in X$, where $f_0 = f_0(u_a, u_b)$. Integrating by parts and rearranging the terms one gets
\begin{multline*}
  D \widehat{\mathcal{I}}(y, v)[z, w] 
  \\
  = \Big\langle \int_a^b \nabla_u f(u(x), v(x), x) \, dx + \nabla_{u_a} f_0(y, u(b)) + \nabla_{u_b} f_0(y, u(b)), 
  z \Big\rangle
  \\
  + \int_a^b \langle P[y, v](x), w(x) \rangle \, dx
\end{multline*}
for all $(z, w) \in X$, where
\[
  P[y, v](x) = \int_x^b \nabla_u f(u(s), v(s), s) \, d s + \nabla_{\xi} f(u(x), v(x), x) 
  + \nabla_{u_b} f_0(y, u(b))
\]
for a.e. $x \in (a, b)$. With the use of the growth conditions \eqref{eq:1dimGrowthCond} one can readily check that 
$P[y, v] \in L^2((a, b); \mathbb{R}^d)$. Therefore, one can conclude that the gradient 
$\nabla \widehat{\mathcal{I}}(y, v) \in X$ of the functional $\widehat{\mathcal{I}}$ has the form
\begin{multline} \label{eq:1dimVarFuncGrad}
  \nabla \widehat{\mathcal{I}}(y, v) 
  \\
  = \Big( \int_a^b \nabla_u f(u(x), v(x), x) \, dx + \nabla_{u_a} f_0(y, u(b)) + \nabla_{u_b} f_0(y, u(b)), 
  P[y, v] \Big)
\end{multline}
for all $(y, v) \in \mathbb{R}^d \times L^2((a, b); \mathbb{R}^d$. Arguing in a similar way one can readily verify that
\begin{equation} \label{eq:BoundaryConstrGrad}
\begin{split}
  \nabla \widehat{f}_j(y, v) 
  &= \Big( \nabla_{u_a} f_j(y, u(b)) + \nabla_{u_b} f_j(y, u(b)), \nabla_{u_b} f_j(y, u(b)) \Big) \in X,
  \\
  \nabla \widehat{g}_i(y, v) 
  &= \Big( \nabla_{u_a} g_i(y, u(b)) + \nabla_{u_b} g_i(y, u(b)), \nabla_{u_b} g_i(y, u(b)) \Big) \in X
\end{split}
\end{equation}
for all $j \in J$ and $i \in M$, where $f_j = f_j(u_a, u_b)$ and $g_i = g_i(u_a, u_b)$.

\begin{remark}
It should be pointed out that with the use of Demyanov's ``transition into the space of derivatives'' technique we were
able to obtain simple and explicit analytical expressions for the gradients of the objective functional and constraints
in a straightforward manner. Without this technique, computation of the gradients of these functions is a challenging
task. 
\end{remark}

Denote $\widehat{F} = (\widehat{f}_1, \ldots, \widehat{f}_{\ell})$ and 
$\widehat{G} = (\widehat{g}_1, \ldots, \widehat{g}_m)$. The classical Lagrangian for problem
\eqref{prob:BoundaryConstr_DerivSpace} has the form
\[
  L(y, v, \lambda, \mu) = \widehat{\mathcal{I}}(y, v) 
  + \langle \lambda, \widehat{F}(y, v) \rangle + \langle \mu, \widehat{G}(y, v) \rangle
  \quad \forall \lambda \in \mathbb{R}^{\ell}, \: \mu \in \mathbb{R}^m.
\]
Its gradient $\nabla_{(u, v)} L(y, v, \lambda, \mu)$ in $(y, v)$ can be easily computed with the use of
\eqref{eq:1dimVarFuncGrad} and \eqref{eq:BoundaryConstrGrad}. An exact augmented Lagrangian 
$\mathscr{L}(y, v, \lambda, \mu, c)$ with $\lambda \in \mathbb{R}^{\ell}$ and $\mu \in \mathbb{R}^m$ for problem 
\eqref{prob:BoundaryConstr_DerivSpace} is defined according to equalities \eqref{eq:ExactAugmLagrDef} and 
\eqref{eq:KKTPenTerm}. For the sake of brevity and convenience, we will write in explicitly in terms of the original
problem \eqref{prob:BoundaryConstraints}:
\begin{align*}
  \mathscr{L}(u, \lambda, \mu, c) &= \int_a^b f(u(x), u'(x), x) \, dx + f_0(u(a), u(b))
  \\
  &+ \langle \lambda, F(u(a), u(b)) \rangle + \frac{c}{2} (1 + |\lambda|^2) |F(u(a), u(b))|^2
  \\
  &+ \Big\langle \mu, \max\Big\{ G(u(a), u(b)), - \frac{1}{c(1 + |\mu|^2)} \mu \Big\} \Big\rangle
  \\
  &+ \frac{c}{2} (1 + |\mu|^2) \Big|\max\Big\{ G(u(a), u(b)), - \frac{1}{c(1 + |\mu|^2)} \mu \Big\}\Big|^2
  + \eta(u, \lambda, \mu),
\end{align*}
where $F = (f_1, \ldots, f_{\ell})$, $G = (g_1, \ldots, g_m)$, and
\begin{align*}
  \eta(u, \lambda, \mu) &= \frac{1}{2} \sum_{j = 1}^{\ell}
  \Big( \langle \nabla_{u_a} f_j(u(a), u(b)) + \nabla_{u_b} f_j(u(a), u(b)), \nabla_y L(u, \lambda, \mu) \rangle
  \\
  &+ \int_a^b \langle \nabla_{u_b} f_j(u(a), u(b)), \nabla_v L(u, \lambda, \mu)(x) \rangle \, dx \Big)^2
  \\
  &+ \frac{1}{2} \sum_{i = 1}^{m}
  \Big( \langle \nabla_{u_a} g_i(u(a), u(b)) + \nabla_{u_b} g_i(u(a), u(b)), \nabla_y L(u, \lambda, \mu) \rangle 
  \\
  &+ \int_a^b \langle \nabla_{u_b} g_i(u(a), u(b)), \nabla_v L(u, \lambda, \mu)(x) \rangle \, dx 
  + g_i(u(a), u(b)^2 \mu_i \Big)^2,
\end{align*}
and
\begin{align*}
  \nabla_y L(u, \lambda, \mu) &= \int_a^b \nabla_u f(u(x), u'(x), x) \, dx 
  + \nabla_{u_a} f_0(u(a), u(b)) + \nabla_{u_b} f_0(u(a), u(b))
  \\
  &+ \sum_{j = 1}^{\ell} \lambda_j \big( \nabla_{u_a} f_j(u(a), u(b)) + \nabla_{u_b} f_j(u(a), u(b)) \big)
  \\
  &+ \sum_{i = 1}^m \mu_i \big( \nabla_{u_a} g_i(u(a), u(b)) + \nabla_{u_b} g_i(u(a), u(b)) \big),
\end{align*}
and
\[
  \nabla_v L(u, \lambda, \mu)(x) = P[u](x) + \sum_{j = 1}^{\ell} \lambda_j \nabla_{u_b} f_j(u(a), u(b))
  + \sum_{i = 1}^m \mu_i \nabla_{u_b} g_i(u(a), u(b)),
\]
and 
\[
  P[u](x) = \int_x^b \nabla_u f_0(u(s), u'(s), s) \, ds + \nabla_{\xi} f_0(u(x), u'(x), x)
  + \nabla_{u_b} f_0(u(a), u(b))
\]
for a.e. $x \in (a, b)$. 

Let us provide sufficient conditions for the exactness of the augmented Lagrangian $\mathscr{L}(y, v, \lambda, \mu, c)$,
which ensure that the problem
\begin{equation} \label{prob:ExAugmLagr_BoundaryConstr}
  \min_{(y, v, \lambda, \mu) \in \mathbb{R}^d \times L^2((a, b); \mathbb{R}^d) \times \mathbb{R}^{\ell + m}} 
  \mathscr{L}(y, \mu, \lambda, \mu, c)
\end{equation}
is (in some sense) equivalent to problem \eqref{prob:BoundaryConstraints} for any sufficiently large $c$. We will
formulate such conditions in terms of the functions $f_0$, $f_j$, $j \in J$, $g_i$, $i \in M$, and their derivatives.
To avoid cumbersome assumptions on the function $f_0$ and its first and second order derivatives, we will assume that
$f_0$ is quadratic in $\xi$ and bounded below in $u$, although the following theorem can be proved under less
restrictive assumptions on the function $f_0$.

To conveniently formulate the sufficient conditions, recall that a function $\varphi \colon X \to \mathbb{R}$ is called
\textit{coercive}, if $\varphi(x_n) \to + \infty$ for any sequence $\{ x_n \} \subseteq X$ such that 
$\| x_n \| \to + \infty$ as $n \to \infty$.

\begin{theorem}
Let the following assumptions be valid:
\begin{enumerate}
\item{the function $f$ has the form
\[
  f(u, \xi, x) = \langle \xi, h_2(x) \xi \rangle + \langle h_1(u, x), \xi \rangle + h_0(u, x)
  \quad \forall u, \xi \in \mathbb{R}^d, \: x \in [a, b]
\]
for some function $h_2 \in L^{\infty}((a, b); \mathbb{R}^{d \times d})$ and some continuous functions 
$h_1 \colon \mathbb{R}^d \times [a, b] \to \mathbb{R}^d$, and $h_0 \colon \mathbb{R}^d \times [a, b] \to \mathbb{R}$
that are twice continuously differentiable in $u$ and such that
\[
  \langle \xi, h_2(x) \xi \rangle \ge \varkappa |\xi|^2 
  \quad \forall \xi \in \mathbb{R}^d, \: x \in [a, b]
\]
for some $\varkappa > 0$, and $|h_1(u, x)| \le \eta(x)$ and $h_0(u, x) \ge -\eta(x)$ 
for some a.e. nonnegative function $\eta \in L^2(a, b)$ and all $u \in \mathbb{R}^d$ and $x \in [a, b]$;
}

\item{the functions $f_j$, $j \in J \cup \{ 0 \}$, and $g_i$, $i \in M$, are twice continuously differentiable and 
the function $f_0(\cdot) + c(|F(\cdot)|^2 + |\max\{ G(\cdot), 0 \}|^2)$ is coercive for some $c > 0$;}

\item{the gradients $\nabla f_j(z)$, $j \in J$, and $\nabla g_i(z)$, $i \in \{ k \in M \mid g_k(z) = 0 \}$ are linearly
independent for any $z \in \mathbb{R}^{2d}$.}
\end{enumerate}
Then the augmented Lagrangian $\mathscr{L}(y, v, \lambda, \mu, c)$ is globally exact and for any $\gamma \in \mathbb{R}$
there exists $c(\gamma) > 0$ such that for all $c \ge c(\gamma)$ the following statements hold true:
\begin{enumerate}
\item{the sublevel set $S_c(\gamma)$ is bounded;}

\item{if $(y_*, v_*, \lambda_*, \mu_*) \in S_c(\gamma)$ is a point of local minimum of problem
\eqref{prob:ExAugmLagr_BoundaryConstr}, then $u_* = \mathcal{A}(y_*, v_*)$ is a locally optimal solution of
problem \eqref{prob:BoundaryConstraints};
}

\item{a quadruplet $(y_*, v_*, \lambda_*, \mu_*) \in S_c(\gamma)$ is a stationary point of problem
\eqref{prob:ExAugmLagr_BoundaryConstr} if and only if $(u_*, \lambda_*, \mu_*)$ with $u_* = \mathcal{A}(y_*, v_*)$ is a
KKT point of problem \eqref{prob:BoundaryConstraints} with $\mathcal{I}(u_*) \le \gamma$.
}
\end{enumerate}
\end{theorem}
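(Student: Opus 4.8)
The plan is to derive the exactness of $\mathscr{L}$ for problem~\eqref{prob:BoundaryConstr_DerivSpace} from the general theory of the first part~\cite{Dolgopolik_ExAugmLagr} by verifying its hypotheses for the reformulated problem in the space $X = \mathbb{R}^d \times L^2((a,b);\mathbb{R}^d)$, and then transporting the conclusions back to the original problem~\eqref{prob:BoundaryConstraints} via the isometric-type isomorphism $u = \mathcal{A}(y,v)$. The main global-exactness result in~\cite{Dolgopolik_ExAugmLagr} requires, roughly: (i) continuous Fr\'echet differentiability of objective and constraints (already established in the excerpt, using the growth conditions~\eqref{eq:1dimGrowthCond} and continuity of Nemytskii operators); (ii) a boundedness/coercivity-type condition guaranteeing the sublevel sets $Z_c(\gamma)$ are bounded; (iii) positive definiteness of the function $Q(x)[\cdot]$ in Definition~\ref{def:Q_PositiveDef} on the relevant region, which plays the role of a constraint-qualification/second-order condition; and (iv) continuity of $x \mapsto \sigma_{\max}(Q(x))$ or at least a uniform lower bound on sublevel sets. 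So the proof splits into establishing these four ingredients from assumptions~1--3.

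First I would establish boundedness of $Z_c(\gamma)$. Writing $\widehat{\mathcal{I}}(y,v) = \int_a^b f(u,u',x)\,dx + f_0(u(a),u(b))$ with $u = \mathcal{A}(y,v)$, the quadratic-growth structure of $f$ from assumption~1 gives $\int_a^b f(u,u',x)\,dx \ge \varkappa \|v\|_2^2 - \|\eta\|_2\|v\|_2 - (b-a)^{1/2}\|\eta\|_2$ after Cauchy--Schwarz on the $h_1$ term and using $h_0 \ge -\eta$; hence the functional controls $\|v\|_2^2$ up to lower-order terms and the $f_0$ contribution. The penalty term $c(|F(u(a),u(b))|^2 + |\max\{G(u(a),u(b)),0\}|^2)$ together with the coercivity hypothesis in assumption~2 controls $|(u(a),u(b))| = |(y, y+\int_a^b v)|$, hence controls $|y|$ once $\|v\|_2$ is bounded. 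Combining, $f(u) + c(\cdots) \le \gamma$ forces both $\|v\|_2$ and $|y|$ bounded, so $Z_c(\gamma)$ is bounded in $X$; by the structure of $\mathscr{L}$ (its sublevel sets project into $Z_c$-type sets after discarding the nonnegative $\eta$ and penalty squares), this also yields boundedness of the $x$-component of $S_c(\gamma)$, and a standard argument from~\cite{Dolgopolik_ExAugmLagr} bounds the multipliers using $\eta \le \gamma$ plus positive definiteness of $Q$.

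Next I would verify that $Q(x)[\cdot]$ is positive definite at every $x = \mathcal{A}(y,v)$, uniformly on bounded sets, using assumption~3. Using the gradient formulas~\eqref{eq:1dimVarFuncGrad}--\eqref{eq:BoundaryConstrGrad}, the vector $DF(x)^*[\lambda] + \sum_i \mu_i \nabla \widehat{g}_i(x) \in X$ has first component $\sum_j \lambda_j(\nabla_{u_a}f_j + \nabla_{u_b}f_j) + \sum_i \mu_i(\nabla_{u_a}g_i + \nabla_{u_b}g_i)$ and $L^2$-component the \emph{constant} function $\sum_j \lambda_j \nabla_{u_b}f_j + \sum_i \mu_i \nabla_{u_b}g_i$. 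Then $DF(x)$ applied to a pair $(z,w)$ gives, for the $f_j$-row, $\langle \nabla_{u_a}f_j + \nabla_{u_b}f_j, z\rangle + \int_a^b \langle \nabla_{u_b}f_j, w\rangle$, so $Q$ reduces to an explicit finite-dimensional quadratic form in $(\lambda,\mu)$ whose matrix is built from the Gram matrix of the combined-gradient vectors $(\nabla_{u_a} + \nabla_{u_b})$ evaluated at $z = (u(a),u(b))$, plus a rank-one-per-index contribution from the constant $L^2$ parts, plus the $\diag(g_i^2)\mu$ term. I would show this form is bounded below by $\sigma(\|\lambda\|^2 + |\mu|^2)$: on the subspace where the active-$g_i$ and all-$f_j$ components of $\mu,\lambda$ would make the combined gradient vanish, assumption~3 (linear independence of $\nabla f_j(z), \nabla g_i(z)$ for active $i$, which forces linear independence of the $2d$-vectors $(\nabla_{u_a}f_j, \nabla_{u_b}f_j)$ etc.) plus the $g_i^2\mu_i$ terms handling inactive indices force all multipliers to zero; a compactness argument over the bounded set of relevant $z$ then upgrades this to a uniform $\sigma > 0$ (here one also uses continuity of the second derivatives from assumption~2, though only first derivatives enter $Q$).

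Finally, with these ingredients in hand, I would invoke the global-exactness theorem and the local-correspondence results of~\cite{Dolgopolik_ExAugmLagr} to conclude statements~1--3 for problem~\eqref{prob:BoundaryConstr_DerivSpace}, and then translate each statement through $u = \mathcal{A}(y,v)$: since $\mathcal{A}$ is a linear homeomorphism $X \to W^{1,2}([a,b];\mathbb{R}^d)$ and the constraints of~\eqref{prob:BoundaryConstraints} depend on $u$ only through $(u(a),u(b))$, stationary points, local minimizers, and KKT points correspond bijectively, and the sublevel-set bound in $X$ transfers to $\mathcal{I}(u_*) \le \gamma$. The main obstacle I anticipate is the uniform positive-definiteness estimate for $Q(x)[\cdot]$ in step three: one must carefully disentangle how the ``combined'' boundary gradients $\nabla_{u_a} \pm \nabla_{u_b}$ and the separate $\nabla_{u_b}$ pieces interact in the quadratic form, check that assumption~3 as stated (linear independence of the $\mathbb{R}^{2d}$-gradients of the $f_j$ and active $g_i$) really is enough rather than some stronger condition on the combined vectors, and make the compactness argument rigorous given that the set of attainable boundary pairs $(u(a),u(b))$ over a sublevel set is bounded but the index set of active inequality constraints varies with $z$ — this is handled by the standard trick of bounding $Q$ from below by the minimum over the closed bounded set of $z$ of the smallest eigenvalue of a finite family of quadratic forms indexed by subsets of $M$.
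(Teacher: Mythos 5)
Your overall strategy is the one the paper follows: verify the hypotheses of the general exactness theorems from the first part for the reformulated problem in $X = \mathbb{R}^d \times L^2((a,b);\mathbb{R}^d)$, namely boundedness of $Z_c(\gamma)$ (via the lower bound $f \ge \varkappa|\xi|^2 - \eta|\xi| - \eta$ controlling $\|v\|_2$ and the coercivity of $f_0 + c(|F|^2 + |\max\{G,0\}|^2)$ controlling $|y|$), and positive definiteness of $Q$ on bounded sets (via the observation that linear dependence of the transformed gradients $\bigl((\nabla_{u_a}+\nabla_{u_b})f_j, \nabla_{u_b}f_j\bigr)$ would force linear dependence of the $\nabla f_j(z)$, $\nabla g_i(z)$ in $\mathbb{R}^{2d}$, contradicting assumption~3, followed by a compactness argument over the bounded set of boundary pairs $(y,u(b))$). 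Your worry about the varying active index set is handled in the paper more simply: the full quadratic form $Q(y,v)[\lambda,\mu]$, including the $\diag(g_i^2)\mu$ terms for \emph{all} $i \in M$, is given by a single matrix $E_Q(y,u(b))$ depending continuously on the finite-dimensional data, so its smallest eigenvalue is a continuous positive function and is bounded away from zero on bounded sets; no enumeration over subsets of $M$ is needed.

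There is, however, one genuine gap: your list of hypotheses to be verified omits the weak sequential lower semicontinuity of $\mathscr{L}(y,v,\lambda,\mu,c)$ in $(y,v)$, which is an essential assumption of the cited global-exactness theorems (it appears as an explicit hypothesis in the analogous theorems for the isoperimetric and nonholonomic problems) and is the main reason assumption~1 restricts $f$ to be quadratic and uniformly elliptic in $\xi$. The paper spends a paragraph on this: weak sequential l.s.c.\ of $\mathcal{I}$ follows from the structure of $f$ via a classical lower-semicontinuity theorem for integral functionals, the maps $(y,v)\mapsto \nabla_u f(u(\cdot),v(\cdot),\cdot)$ and $(y,v)\mapsto \nabla_\xi f(u(\cdot),v(\cdot),\cdot)$ are weakly sequentially continuous because they are affine in $v$, and weak convergence of $(y_n,v_n)$ in $X$ forces convergence of $(y_n,u_n(b))$ in $\mathbb{R}^{2d}$, so all boundary terms behave well. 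Without this step the invocation of the global-exactness theorem does not go through. A related, smaller omission: you only invoke continuous (first-order) Fr\'{e}chet differentiability, whereas the general theory needs $\widehat{\mathcal{I}}$ to be \emph{twice} continuously Fr\'{e}chet differentiable with first and second derivatives bounded on bounded sets (the term $\eta$ in the augmented Lagrangian involves the gradient of the Lagrangian, so its analysis requires second derivatives); this, too, is extracted from the quadratic structure in assumption~1 together with the twice continuous differentiability of $h_0$, $h_1$ in $u$. The rest of your argument, including the transfer back to problem \eqref{prob:BoundaryConstraints} through the homeomorphism $\mathcal{A}$, matches the paper.
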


\begin{proof}
Note that the first assumption of the theorem implies that the growth conditions \eqref{eq:1dimGrowthCond} hold true.
Furthermore, one can readily check that the first assumptions also ensures that the functional $\widehat{\mathcal{I}}$
is twice continuously Fr\'{e}chet differentiable and all its first and second order derivatives are bounded on bounded
subsets of the space $X$.

In turn, by \cite[Thm.~3.23]{Dacorogna} the conditions on the functions $h_2$, $h_1$, and $h_0$ guarantee that 
the functional $\mathcal{I}$ (and hence the functional $\widehat{\mathcal{I}}$ as well) is weakly sequentially l.s.c.,
while the mappings
\[
  (y, v) \mapsto \nabla_u f(\mathcal{A}(y, v)(\cdot), v(\cdot), \cdot), \quad
  (y, v) \mapsto \nabla_{\xi} f(\mathcal{A}(y, v)(\cdot), v(\cdot), \cdot)
\]
are weakly sequentially continuous, since they are affine in $v$ (see \cite{Dacorogna_WeakCont}). Note also that if a
sequence $\{ (y_n, v_n) \}$ weakly converges in $X$, then the corresponding sequence $\{ (y_n, u_n(b)) \}$ converges in
$\mathbb{R}^{2d}$. With the use of these facts one can readily check that the augmented Lagrangian 
$\mathscr{L}(y, v, \lambda, \mu, c)$ is weakly sequentially lower semicontinuous in $(y, v)$.

In addition, the first assumption of the theorem implies that
\[
  f(u, \xi, x) \ge \varkappa |\xi|^2 - \eta(x) |\xi| - \eta(x) 
  \quad \forall \xi, u \in \mathbb{R}^d, \text{ a.e. } x \in (a, b)
\]
Therefore, if a sequence $\{ (y_n, v_n) \} \subset X$ is such that $\| v_n \|_2 \to + \infty$ as $n \to \infty$, then
$\int_a^b f(u_n(x), v_n(x), x) \, dx \to + \infty$ as $n \to \infty$. In turn, if the sequence $\{ v_n \}$ is bounded in
$L^2((a, b); \mathbb{R}^d)$, but $|y_n| \to + \infty$ as $n \to \infty$, then the sequence 
$\{ \int_a^b f(u_n(x), v_n(x), x) \, dx \}$ is bounded below,
while 
\[
  f_0(y_n, u_n(b)) + c \Big( |F(y_n, u_n(b))|^2 + |\max\{ G(y_n, u_n(b)), 0 \}|^2 \Big) \to + \infty
\]
as $n \to \infty$ by the second assumption of the theorem. Therefore, the set $\Omega_c(\gamma)$ (see
\eqref{eq:PenFuncSublevelSet}) is bounded for any $c > 0$ and $\gamma \in \mathbb{R}$.

Let us finally check that the third assumption of the theorem guarantees that for any bounded set $V \subset X$ there
exists $\sigma > 0$ such that $\sigma_{\max}(Q(y, v)) \ge \sigma$ for all $(y, v) \in V$. Then applying 
\cite[Cor.~3.6 and Thms.~5.3 and 5.7]{Dolgopolik_ExAugmLagr} one obtains the required result.

First, we show that $Q(y, v)[\cdot]$ is positive definite for any $(y, v) \in X$. Indeed, fix any $(y, v) \in X$. By
\cite[Lemma~3.3]{Dolgopolik_ExAugmLagr} the function $Q(y, v)[\cdot]$ is positive definite if and only if the linear
operator $\mathcal{T} \colon X \to \mathbb{R}^{\ell + m(y, v)}$ defined as
\[
  \mathcal{T}(z, w) = \prod_{j = 1}^{\ell} \big\{ \langle \nabla \widehat{f}_j(y, v), (z, w) \rangle \big\}
  \times \prod_{i \in M(y, v)} \big\{ \langle \nabla \widehat{g}_i(y, v), (z, w) \rangle \big\}
\]
is surjective, where $M(y, v) = \{ i \in M \mid \widehat{g}_i(y, v) = 0 \}$ and $m(y, v)$ is the cardinality of the set
$M(y, v)$.

As is easily seen, the linear operator $\mathcal{T}$ is surjective if and only if the vectors 
\begin{equation} \label{eq:BoundaryConstrGrad_LinIndep}
  \nabla \widehat{f}_j(y, v), \quad j \in \{ 1, \ldots, \ell \}, \quad
  \nabla \widehat{g}_i(y, v), \quad i \in M(y, v),
\end{equation}
are linearly independent. Therefore, the function $Q(y, v)[\cdot]$ is positive definite, provided these vectors are
linearly independent. Let us check that these vectors are indeed linearly independent under the assumptions of the
theorem.

Suppose by contradiction that the vectors \eqref{eq:BoundaryConstrGrad_LinIndep} are linearly dependent. Then bearing 
in mind \eqref{eq:BoundaryConstrGrad} one gets that there exist real numbers $\alpha_j$, $j \in \{ 1, \ldots, \ell \}$,
and $\beta_i$, $i \in M(y, v)$, not all zero, such that
\begin{align*}
  &\sum_{j = 1}^{\ell} \alpha_j (\nabla_{u_a} + \nabla_{u_b}) f_j(y, u(b)) 
  + \sum_{i \in M(y, v)} \beta_j (\nabla_{u_a} + \nabla_{u_b}) g_i(y, u(b)) = 0,
  \\
  &\sum_{j = 1}^{\ell} \alpha_j \nabla_{u_b} f_j(y, u(b)) + \sum_{i \in M(y, v)} \beta_j \nabla_{u_b} g_i(y, u(b)) = 0,
\end{align*}
where $u = \mathcal{A}(y, v)$. Consequently, one has
\[
  \sum_{j = 1}^{\ell} \alpha_j \nabla f_j(y, u(b)) + \sum_{i \in M(y, v)} \beta_j \nabla g_i(y, u(b)) = 0,
\]
which contradicts the third assumption of the theorem. Thus, the function $Q(y, v)[\cdot]$ is positive definite for any
$(y, v) \in X$.

Observe that from the definition of the function $Q(\cdot)$ (see \eqref{eq:Q_def}) it follows that in the case of the
problem under consideration it has the form
\[
  Q(y, v)[\lambda, \mu] 
  = \langle \left( \begin{smallmatrix} \lambda \\ \mu \end{smallmatrix} \right), 
  E_Q(y, u(b)) \left( \begin{smallmatrix} \lambda \\ \mu \end{smallmatrix} \right) \rangle
  \quad \forall \lambda \in \mathbb{R}^{\ell}, \: \mu \in \mathbb{R}^m
\]
for some matrix $E_Q(y, u(b))$ defined via the function $G$ and the partial derivatives of $F$ and $G$ and,
therefore, continuously depending on $(y, u(b))$. 

It is easily seen that $\sigma_{\max}(Q(y, v))$ coincides with the smallest eigenvalue of the matrix $E_Q(y, u(b))$
that continuously depends on $(y, u(b))$, since the matrix $E_Q(y, u(b))$ continuously depends on $(y, u(b))$.
Hence taking into account the fact that the matrix $E_Q(\cdot)$ is positive definite (since for all $(y, v) \in X$
one has $\sigma_{max}(Q(y, v)) > 0$) one can conclude that for any set $V \subset X$ such that the set 
$V(a, b) = \{ (y, u(b)) \in \mathbb{R}^{2d} \mid (y, v) \in V \}$ is bounded, there exists $\sigma > 0$ such that
$\sigma_{\max}(Q(y, v)) \ge \sigma$. It remains to note that if a set $V \subset X$ is bounded, then the set
$V(a, b)$ is bounded as well.
\end{proof}

%

\subsection{Isoperimetric problems}
\label{subsect:Isoperimetric}

Let us now consider an exact augmented Lagrangian for multidimensional variational problems with isoperimetric
constraints. For the sake of simplicity, we restrain our consideration to the case of a single equality isoperimetric
constraint, although all results of this subsection can be extended to the case of variational problems with multiple
equality and inequality isoperimetric constraints.

Let $E = \prod_{i = 1}^n (a_i, b_i) \subset \mathbb{R}^n$ be a bounded open box and 
$\overline{u} \in W^{1, 2}(E; \mathbb{R}^d)$ be a given function. Consider the following variational problem with
an isoperimetic equality constraint:
\begin{equation} \label{prob:Isoperimetric}
\begin{split}
  &\min_{u \in \overline{u} + W^{1, 2}_0(E; \mathbb{R}^d)} \enspace 
  \mathcal{I}_0(u) = \int_E f_0(u(x), \nabla u(x), x) \, dx
  \\
  &\text{subject to} \enspace \mathcal{I}_1(u) = \int_E f_1(u(x), \nabla u(x), x) \, dx - \zeta = 0.
\end{split}
\end{equation}
Here $f_i \colon \mathbb{R}^d \times \mathbb{R}^{d \times n} \times E \to \mathbb{R}$, $f_i = f_i(u, \xi, x)$ are
Carath\'{e}odory functions that are differentiable in $u$ and $\xi$ and their partial derivatives $\nabla_u f_i$ and
$\nabla_{\xi} f_i$ are Carath\'{e}odory functions as well, while $\zeta \in \mathbb{R}$ is a given constant. Although
without loss of generality one can suppose that $\zeta = 0$, in many applications it is more convenient to consider the
case $\zeta \ne 0$, instead of redefining the function $f_1$. Finally, the function 
$\overline{u} \in W^{1, 2}(E; \mathbb{R}^d)$ defines boundary conditions, since
\[
  \{ \overline{u}  \} + W^{1, 2}_0(E; \mathbb{R}^d) 
  = \Big\{ u \in W^{1, 2}(E; \mathbb{R}^d) \Bigm| \trace(u) = \trace(\overline{u}) \Big\},
\]
where $\trace(\cdot)$ is the trace operator (see \cite[Sects.~5.20--5.22]{Adams}).

We impose the following growth conditions on the functions $f_i$ and their partial derivatives. Namely, there exist 
$C > 0$ and a.e. nonnegative functions $\eta \in L^1(E)$ and $\theta \in L^2(E)$ such that
\begin{equation} \label{eq:MultiDimGrowthCond}
\begin{split}
  |f_i(u, \xi, x)| &\le C\big( |u|^2 + |\xi|^2 \big) + \eta(x), 
  \\
  \max\big\{ |\nabla_u f_i(u, \xi, x)|, |\nabla_{\xi} f_i(u, \xi, x)| \big\} 
  &\le C\big( |u| + |\xi| \big) + \theta(x).
\end{split}
\end{equation}
for a.e. $x \in E$ and any $u \in \mathbb{R}^d$, $\xi \in \mathbb{R}^{d \times n}$ and $i \in \{ 0,  1 \}$. Let us
note that these growth conditions can be relaxed with the use of the Sobolev imbedding theorem \cite[Thm.~5.4]{Adams}.
In particular, in the case $n = 1$ it is sufficient to assume that the growth conditions of the form
\eqref{eq:1dimGrowthCond} hold true. The first inequality in \eqref{eq:MultiDimGrowthCond} can be replaced with
$|f_i(u, \xi, x)| \le C\big( |u|^q + |\xi|^2 \big) + \eta(x)$, where $q = 2n/(n - 2)$ in the case $n > 2$, and 
$q$ is any number in $[2, + \infty)$ in the case $n = 2$, etc.

The growth conditions \eqref{eq:MultiDimGrowthCond} ensure that the functionals $\mathcal{I}_i$, $i \in \{ 0, 1 \}$, are
G\^{a}teaux differentiable and their G\^{a}teaux derivatives have the form:
\begin{equation} \label{eq:MultiDimVarFuncDeriv}
  \mathcal{I}_i'(u)[w] = \int_E \big( \langle \nabla_u f_i(u(x), u'(x), x), w(x) \rangle 
  + \langle \nabla_{\xi} f_i(u(x), u'(x), x), \nabla w(x) \rangle \big) \, dx
\end{equation}
for all $w \in W^{1, 2}(E; \mathbb{R}^d)$ (see, e.g. \cite[Sect.~3.4.2]{Dacorogna}). Arguing in the same way as in
the previous subsection and utilising the standard results on the continuity of Nemytskii operators
\cite{AppellZabrejko} one can verify that the growth conditions \eqref{eq:MultiDimGrowthCond} also guarantee that 
the G\^{a}teaux derivatives $\mathcal{I}'_i(\cdot)$ are continuous and, therefore, the functionals $\mathcal{I}_i$, 
$i \in \{ 0, 1 \}$, are continuously Fr\'{e}chet differentiable.

In order to compute the gradients of the functionals $\mathcal{I}_i$, we will utilise an extension of the ``transition
into the space of derivatives'' technique to the multidimensional case, developed by the author in
\cite{Dolgopolik_MultidimCalcVar}. To this end, suppose that for all KKT points $(u_*, \lambda_*)$ of problem
\eqref{prob:Isoperimetric} one has $u_* \in \overline{u} + MW_0^{n, 2}(E; \mathbb{R}^d)$ (see
Subsection~\ref{subsect:SobolevSpaces}). In other words, suppose that solutions of the corresponding Euler-Lagrange
equation are more regular than is assumed in the formulation of the problem, in the sense that there exist all weak
mixed derivatives of these solutions of all orders $k \in \{ 2, \ldots, n \}$ and these derivatives belong to
$L^2(E)$. Then problem \eqref{prob:Isoperimetric} is equivalent to the following variational problem:
\begin{equation} \label{prob:IsoperimetricRegular}
\begin{split}
  &\min_{u \in MW^{n, 2}_0(E; \mathbb{R}^d)} \enspace 
  \mathcal{I}_0(\overline{u} + u) 
  = \int_E f_0(\overline{u}(x) + u(x), \nabla \overline{u}(x) + \nabla u(x), x) \, dx
  \\
  &\text{s.t.} \enspace 
  \mathcal{I}_1(\overline{u} + u) = 
  \int_E f_1(\overline{u}(x) + u(x), \nabla \overline{u}(x) + \nabla u(x), x) \, dx - \zeta = 0.
\end{split}
\end{equation}
By Theorem~\ref{thm:MixedSobolev} this problem is equivalent to the following one:
\begin{equation} \label{prob:IsoperimetricDerivativeSpace}
  \min_{v \in X} \enspace 
  \widehat{\mathcal{I}}_0(v) := \mathcal{I}_0(\overline{u} + \mathcal{A} v)
  \quad
  \text{subject to} \enspace 
  \widehat{\mathcal{I}}_1(v) := \mathcal{I}_1(\overline{u} + \mathcal{A} v) = \zeta,
\end{equation}
where the linear operator $\mathcal{A}$ is defined in \eqref{eq:MultidimIntegralOper} and
\begin{equation} \label{eq:DerivSpace_MultiDim}
  X = \left\{ v \in L^2(E; \mathbb{R}^d) \biggm| \int_{a_i}^{b_i} v \, ds_i = 0 \text{ for a.e. } x \in E 
  \enspace \forall i \in \{ 1, \ldots, n \} \right\}.
\end{equation}
Applying \eqref{eq:MultiDimVarFuncDeriv} and integrating by parts (see \cite{Dolgopolik_MultidimCalcVar} for more
details) one obtains that the functionals $\widehat{\mathcal{I}}_i$, $i \in \{ 0, 1 \}$, are continuously Fr\'{e}chet
differentiable and their Fr\'{e}chet derivatives have the form
\[
  D \widehat{\mathcal{I}}_i[v](h) = \int_E \langle P_i(v)(x), h(x) \rangle dx \quad \forall h \in X,
\]
where
\begin{align} \notag
  P_i(v)(x) = &(-1)^n \int_{x_1}^{b_1} \ldots \int_{x_n}^{b_n} \frac{\partial f}{\partial u}(u(s), \nabla u(s), s) \, ds
  \\ \label{eq:MultidimCV_Grad}
  &\begin{aligned}[t]+ (-1)^{n - 1} \sum_{i = 1}^n 
    &\int_{x_n}^{b_n} \ldots \int_{x_{i + 1}}^{b_{i + 1}} \int_{x_{i - 1}}^{b_{i - 1}} \ldots \int_{x_1}^{b_1}
    \\ 
    &\frac{\partial f}{\partial \xi_i} (u(s^i), \nabla u(s^i), s^i) \, ds_1 \ldots ds_{i - 1} ds_{i + 1} \ldots ds_n
  \end{aligned}
\end{align}
for a.e. $x \in E$, $u = \overline{u} + \mathcal{A} v$, 
$\partial f / \partial \xi_i = (\partial f / \partial \xi_{1i}, \ldots, \partial f / \partial \xi_{di})$, and
$s^i = (s_1, \ldots, s_{i - 1}, x_i, s_{i + 1}, \ldots, s_n)$ for any $i \in \{ 1, \ldots, n \}$. With the use of the
growth conditions \eqref{eq:MultiDimGrowthCond} one can readily check that the nonlinear operator $P_i(\cdot)$
continuously maps $L^2(E)$ to $L^2(E)$. Therefore, the gradient of the functional $\widehat{\mathcal{I}}_i$ at
a point $v \in X$ in the Hilbert space $X$ is the orthogonal projection $\proj_X (P_i(v))$ of the function $P_i(v)$ in
$L^2(E)$ onto the subspace $X$. This projection can be computed analytically with the use of the following
proposition (see \cite[Prop.~3]{Dolgopolik_MultidimCalcVar}).

\begin{proposition}
For any $v \in L^2(E)$ one has
\[
  \proj_X v = v + \sum_{k = 1}^n \sum_{\alpha \in I_k} (- 1)^{|\alpha|} c_{\alpha} S_{\alpha} v,
\]
where for any $\alpha \in I_k$ one has $c_{\alpha} = \prod_{i = 1}^n (b_i - a_i)^{-\alpha_i}$ and
\[
  \Big( S_{\alpha} v \Big)(x) 
  = \int_{a_{\alpha_{i_1}}}^{b_{\alpha_{i_1}}} \ldots \int_{a_{\alpha_{i_k}}}^{b_{\alpha_{i_k}}}
  v \, ds_{i_k} \ldots ds_{i_1},
\]
and the indices $1 \le i_1 < \ldots < i_k \le n$ are such that $\alpha_r = 0$ if and only if $r \ne i_k$ (that is, $i_j$
are indices of nonzero components of the multi-index $\alpha$).
\end{proposition}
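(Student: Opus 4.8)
The plan is to realise the subspace $X$ from \eqref{eq:DerivSpace_MultiDim} as an intersection of $n$ subspaces, each the kernel of a simple averaging operator, and then to invoke the standard fact that a product of a commuting family of orthogonal projections is the orthogonal projection onto the intersection of their ranges. Concretely, for each $i \in \{ 1, \ldots, n \}$ I would introduce the operator $E_i \colon L^2(\Omega) \to L^2(\Omega)$ averaging a function over its $i$-th variable, $(E_i v)(x) = (b_i - a_i)^{-1} \int_{a_i}^{b_i} v(x_1, \ldots, x_{i - 1}, s, x_{i + 1}, \ldots, x_n) \, ds$. By the Fubini theorem $E_i$ is a well-defined bounded linear operator, and a short computation (splitting $\Omega$ into the product of $(a_i, b_i)$ and the box in the remaining variables, and using Fubini) shows that $E_i$ is self-adjoint and idempotent, i.e.\ an orthogonal projection, with $\ker E_i = X_i := \{ v \in L^2(\Omega) \mid \int_{a_i}^{b_i} v \, ds_i = 0 \text{ for a.e. } x \in \Omega \}$. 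Hence $X = \bigcap_{i = 1}^n X_i$, which in particular is closed.

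Next I would observe that $E_1, \ldots, E_n$ commute pairwise: for $i \ne j$ both $E_i E_j$ and $E_j E_i$ equal, by the Fubini theorem, the operator that averages simultaneously over the variables $x_i$ and $x_j$. Setting $P_i := I - E_i$, each $P_i$ is then an orthogonal projection with $\operatorname{range}(P_i) = \ker E_i = X_i$, and the $P_i$ commute because the $E_i$ do. A routine induction shows that $P := P_1 P_2 \cdots P_n$ is again an orthogonal projection (self-adjointness and idempotency follow by reordering the factors using commutativity) and that $\operatorname{range}(P) = \bigcap_{i = 1}^n \operatorname{range}(P_i) = X$. Therefore $\proj_X = \prod_{i = 1}^n (I - E_i)$, and it remains only to expand this product.

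The final step is purely bookkeeping. Expanding $\prod_{i = 1}^n (I - E_i)$, which is legitimate since the $E_i$ commute, gives $\proj_X v = \sum_{S \subseteq \{ 1, \ldots, n \}} (-1)^{|S|} E_S v$, where $E_S := \prod_{i \in S} E_i$ and the term $S = \varnothing$ is $v$ itself. For a nonempty $S = \{ i_1 < \cdots < i_k \}$ the Fubini theorem identifies $E_S v$ with $\prod_{i \in S} (b_i - a_i)^{-1}$ times the iterated integral $\int_{a_{i_1}}^{b_{i_1}} \cdots \int_{a_{i_k}}^{b_{i_k}} v \, ds_{i_k} \cdots ds_{i_1}$. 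Since the subsets $S$ of cardinality $k$ correspond bijectively to the multi-indices $\alpha \in I_k$ via $S = \{ i \mid \alpha_i = 1 \}$, and under this correspondence $(-1)^{|S|} = (-1)^{|\alpha|}$, $\prod_{i \in S}(b_i - a_i)^{-1} = c_\alpha$ and the iterated integral above equals $S_\alpha v$, substitution gives precisely the asserted formula. I do not anticipate any real obstacle here: the Fubini-type identities for the $E_i$ are standard for $L^2$ functions on a bounded box, and the combinatorial matching is routine. The single structural ingredient is the classical fact that a commuting family of orthogonal projections multiplies to the orthogonal projection onto the intersection of their ranges, whose short proof --- idempotency and self-adjointness of the product, together with an easy identification of its range --- can simply be recalled.
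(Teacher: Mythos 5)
Your argument is correct and complete: the averaging operators $E_i$ are indeed commuting orthogonal projections with $\ker E_i = X_i$, the product $\prod_{i=1}^n (I - E_i)$ is therefore the orthogonal projection onto $\bigcap_i X_i = X$, and the inclusion--exclusion expansion reproduces the stated formula term by term. Note that the paper does not prove this proposition itself but imports it from \cite[Prop.~3]{Dolgopolik_MultidimCalcVar}, so there is no in-text proof to compare against; your route via a commuting family of one-variable averaging projections is the natural one and can stand as a self-contained justification.
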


Thus, $\nabla \widehat{\mathcal{I}}_i(v) = \proj_X P_i(v)$, $i \in \{ 0, 1 \}$. The exact augmented Lagrangian
$\mathscr{L}(v, \lambda, c)$ with $\lambda \in \mathbb{R}$ for problem \eqref{prob:IsoperimetricDerivativeSpace} is
defined according to equalities \eqref{eq:ExactAugmLagrDef} and \eqref{eq:KKTPenTerm} as follows:
\begin{align*}
  \mathscr{L}(v, \lambda, c) = &\int_E f_0(u(x), \nabla u(x), x) \, dx 
  + \lambda \bigg( \int_E f_1(u(x), \nabla u(x), x) \, dx - \zeta \bigg)
  \\
  &+ \frac{c}{2} (1 + \lambda^2) \bigg( \int_E f_1(u(x), \nabla u(x), x) \, dx - \zeta \bigg)^2
  \\
  &+ \frac{1}{2} \bigg( \int_E \langle \proj_X P_1(v)(x), \proj_X P_0(v)(x) + \lambda \proj_X P_1(v)(x) \rangle 
  \, dx \bigg)^2  
\end{align*}
where $u = \overline{u} + \mathcal{A} v$ or, equivalently, $v = D^{(1, \ldots, 1)}(u - \overline{u})$ in the weak sense.

Let us provide sufficient conditions for the exactness of the augmented Lagrangian $\mathscr{L}(v, \lambda, c)$. We will
formulate these conditions in ``abstract terms'' and then show how these conditions can be reformulated in terms of
assumptions on the functions $f_0$ and $f_1$ in some particular cases.

\begin{theorem} \label{thm:Isoperimetric_ExAugmLagr}
Let the following assumptions be valid:
\begin{enumerate}
\item{for all KKT points $(u_*, \lambda_*)$ of problem \eqref{prob:Isoperimetric} one has 
$u_* \in \overline{u} + MW_0^{n, 2}(E; \mathbb{R}^d)$;
\label{hyp:Iso1}}

\item{the growth conditions \eqref{eq:MultiDimGrowthCond} hold true; 
\label{hyp:Iso2}}

\item{the functions $f_0$ and $f_1$ are twice differentiable in $u$ and $\xi$ and all their corresponding second order
derivatives are Carath\'{e}odory functions that are essentially bounded on 
$\mathbb{R}^d \times \mathbb{R}^{d \times n} \times \mathbb{R}^n$;
\label{hyp:Iso3}}

\item{the augmented Lagrangian $\mathscr{L}(v, \lambda, c)$ is weakly sequentially l.s.c. in $(v, \lambda)$;}

\item{for any $\gamma \in \mathbb{R}$ there exists $c > 0$ such that the set 
\[
  \Omega_c(\gamma) = \Big\{ u \in \overline{u} + MW_0^{n, 2}(E; \mathbb{R}^d) \Bigm|
  \mathcal{I}_0(u) + c \big( \mathcal{I}_1(u) - \zeta \big)^2 \le \gamma \Big\}
\]
is bounded in $MW^{n, 2}(E; \mathbb{R}^d)$;}

\item{for any bounded subset $V \subset X$ there exists $\sigma > 0$ such that the inequality 
$\| \proj_X P_1(v) \|_2 \ge \sigma$ holds true for all $v \in V$.}
\end{enumerate}
Then the augmented Lagrangian $\mathscr{L}(v, \lambda, c)$ is globally exact and for any $\gamma \in \mathbb{R}$ there
exists $c(\gamma) > 0$ such that for all $c \ge c(\gamma)$ the following statements hold true:
\begin{enumerate}
\item{the sublevel set $S_c(\gamma)$ is bounded in $X$;}

\item{if $(v_*, \lambda_*) \in S_c(\gamma)$ is a point of local minimum of $\mathscr{L}(v, \lambda, c)$, then 
$u_* = \overline{u} + \mathcal{A} v_*$ is locally optimal solution of problem \eqref{prob:Isoperimetric};}

\item{a pair $(v_*, \lambda_*) \in S_c(\gamma)$ is a stationary point of $\mathscr{L}(v, \lambda, c)$ if and only if
$(u_*, \lambda_*)$ with $u_* = \overline{u} + \mathcal{A} v_*$ is a KKT point of problem
\eqref{prob:Isoperimetric} such that $\mathcal{I}_0(u_*) \le \gamma$.
}
\end{enumerate}
\end{theorem}

\begin{proof}
The first assumption of the theorem guarantees that the isoperimetric problem \eqref{prob:Isoperimetric} in the Sobolev
space $W^{1, 2}(E; \mathbb{R}^d)$ is equivalent to this problem formulated in the space 
$MW^{n, 2}(E; \mathbb{R}^d)$ (see \eqref{prob:IsoperimetricRegular}), which is, in turn, equivalent to problem
\eqref{prob:IsoperimetricDerivativeSpace} by Theorem~\ref{thm:MixedSobolev}.

The second and third assumptions of the theorem ensure that the functionals $\widehat{\mathcal{I}}_0$ and
$\widehat{\mathcal{I}}_1$ are twice continuously Fr\'{e}chet differentiable and their first and second order Fr\'{e}chet
derivatives are bounded on bounded subsets of the space $X$.

Note that in the case of problem \eqref{prob:IsoperimetricDerivativeSpace} one has
$Q(v)[\lambda] = (\| \proj_X P_1(v) \|_2)^4 \lambda^2$ (see \eqref{eq:Q_def}). Consequently, the last
assumption of the theorem implies that the function $Q(\cdot)$ is uniformly positive definite on any bounded subset of
the space $X$. Therefore, applying the fifth assumption of the theorem and 
\cite[Cor.~3.6 and Thms.~5.3 and 5.7]{Dolgopolik_ExAugmLagr} one obtains that all three statements of the theorem hold
true.
\end{proof}

\begin{remark}
Let us comment on the assumptions of the previous theorem:

\noindent{(i)~With the use of the Sobolev imbedding theorem one can relax the assumption on the essential boundedness of
second order derivatives of the functions $f_0$ and $f_1$ in some particular cases. For example, in the case $n = 1$
it is sufficient to suppose that for any $R > 0$ there exist $C_R > 0$ and a.e. nonnegative functions 
$\eta_R \in L^1(E)$ and $\theta_R \in L^2(E)$ such that
\begin{gather*}
  \left| \frac{\partial^2 f}{\partial u^2}(u, \xi, x) \right| \le C_R |\xi|^2 + \eta_R(x), \quad
  \left| \frac{\partial^2 f}{\partial u \partial \xi}(u, \xi, x) \right| \le C_R |\xi| + \theta_R(x),
  \\
  \left| \frac{\partial^2 f}{\partial \xi^2}(u, \xi, x) \right| \le C_R
\end{gather*}
for a.e. $x \in E$ and  all $(u, \xi) \in \mathbb{R}^{2d}$ such that $|u| \le R$.
}

\noindent{(ii)~The augmented Lagrangian $\mathscr{L}(v, \lambda, c)$ is weakly sequentially l.s.c. in $(v, \lambda)$, if
$f_0$ is convex in $\xi$ and quadratic in $(u, \xi)$, $f_1$ is affine in $(u, \xi)$, and the functions $f_0$ and $f_1$
satisfy some standard growth conditions. Under these assumptions, by \cite[Cor.~3.24 and Thm.~3.26]{Dacorogna} the
functional $\mathcal{I}_0(\cdot)$ is weakly sequentially l.s.c., the functional $\mathcal{I}_1(\cdot)$ is weakly
sequentially continuous, while the function 
\[
  (v, \lambda) \mapsto \langle \proj_X P_1(v)(\cdot), \proj_X P_0(v)(\cdot) + \lambda \proj_X P_1(v)(\cdot) \rangle
\]
is affine in $v$ and also weakly sequentially continuous (see \cite{Dacorogna_WeakCont}). With the use of these facts
one can readily verify that $\mathscr{L}(v, \lambda, c)$ is weakly sequentially l.s.c. in $(v, \lambda)$. It should be
mentioned that in various particular cases (e.g. the case $n = 1$) one can prove the weak sequential l.s.c. of 
the augmented Lagrangian under less restrictive assumptions.
}

\noindent{(iii)~In the case $n = 1$, the fifth assumption of the theorem is satisfied, provided
\begin{equation} \label{eq:CalcVar_CoercivityCond}
  f_0(u, \xi, x) \ge C_1 |\xi|^2 + C_2 |u|^q + \eta(x) \quad \forall (u, \xi, x) \in \mathbb{R}^{2d} \times E
\end{equation}
for some $C_1 > 0$, $C_2 \in \mathbb{R}$, $1 \le q < 2$, and $\eta \in L^1(E)$, since this inequality implies that
the functional $u \mapsto \mathcal{I}_0(\overline{u} + u)$ is coercive on 
$W^{1, 2}_0(E; \mathbb{R}^d) = MW_0^{1, 2}(E; \mathbb{R}^d)$. In the case $n > 1$, the question of when the
sublevel set $\Omega_c(\gamma)$ of the penalty function $\mathcal{I}_0(\cdot) + c (\mathcal{I}_1(\cdot) - \zeta)^2$ is
actually bounded in $MW_0^{n, 2}(E; \mathbb{R}^d)$ is a challenging open problem for future research.
}

\noindent{(iv)~Let us point out some particular cases in which the last assumption of the theorem holds true. If the
function $f_1$ is affine in $(u, \xi)$, that is, 
$f_1(u, \xi, x) = \langle h_1(x), \xi \rangle + \langle h_2(x), u \rangle + h_3(x)$, then the operator $P_1(\cdot)$ is
constant and the fourth assumption of the theorem holds true, provided the projection of $P_1(\cdot)$ onto $X$ is
nonzero. Note that by \cite[Prop.~5]{Dolgopolik_MultidimCalcVar} the assumption on the projection of $P_1$ is satisfied
in the case when $h_1$ is differentiable, provided the function $h_2(\cdot) - \sum_{i = 1}^n D_i h_1(\cdot)$ is not
identically zero. In the case when the function $f_1$ is quadratic in $(u, \xi)$, the fourth assumption of the theorem
is satisfied, if $\proj_X P_1(v) \ne 0$ for any $v \in X$, since in this case the function $P_1(\cdot)$ is affine, which
implies that the norm $\| \proj_X P_1(\cdot) \|_2$ is weakly lower semicontinuous. With the use of this fact one can
prove the existence of the required $\sigma > 0$. Note that by \cite[Prop.~5]{Dolgopolik_MultidimCalcVar} under some
smoothness assumptions the inequality $\proj_X P_1(v) \ne 0$ means that the Euler-Lagrange equation for the functional
$\mathcal{I}_1$ has no solutions. In the general case, the last assumption of the theorem means that the gradient of
the functional $\mathcal{I}_1$ is bounded away from zero on bounded sets, which under some smoothness assumptions 
implies that the Euler-Lagrange equation for $\mathcal{I}_1(\cdot)$ has no solutions.
}
\end{remark}

Let us provide sufficient conditions for the complete exactness of the augmented Lagrangian $\mathscr{L}(v, \lambda, c)$
that do note involve the restrictive assumption on the boundedness of the sublevel set $\Omega_c(\gamma)$ in the space 
$MW_0^{n, 2}(E; \mathbb{R}^d)$ and the weak sequential lower semicontinuity of the augmented Lagrangian.

\begin{theorem} \label{thm:Isoperimetric_ExAugmLagr_Stronger}
Let assumptions~\ref{hyp:Iso1}--\ref{hyp:Iso3} of Theorem~\ref{thm:Isoperimetric_ExAugmLagr} and the following
assumptions hold true:
\begin{enumerate}
\item{for any $\gamma \in \mathbb{R}$ there exists $r = r(\gamma) > 0$ such that the set
\begin{equation} \label{eq:IsoPenFuncSublevelSet}
  Z_r(\gamma) := \Big\{ u \in W^{1, 2}_0(E; \mathbb{R}^d) \Bigm| 
  \mathcal{I}_0(\overline{u} + u) + r (\mathcal{I}_1(\overline{u} + u) - \zeta)^2 \le gamma \}
\end{equation}
is bounded in $W^{1, 2}(E; \mathbb{R}^d)$;
\label{hyp:Iso2nd}}

\item{for any subset $V \subset X$ such that the set $\{ u = \mathcal{A} v \mid v \in V \}$ is bounded in
$W^{1, 2}_0(E; \mathbb{R}^d)$ there exists $\sigma > 0$ such that the inequality 
$\| \proj_X P_1(v) \|_2 \ge \sigma$ holds true for all $v \in V$;
}
\end{enumerate}
Then for the augmented Lagrangian $\mathscr{L}(v, \lambda, c)$ to be completely exact on the set $S_c(\gamma)$ for any
$\gamma \in \mathbb{R}$ it is necessary and sufficient that there exists an unbounded increasing sequence 
$\{ c_k \} \subset (0, + \infty)$ such that for any $k \in \mathbb{N}$ the augmented Lagrangian 
$\mathscr{L}(\cdot, c_k)$ attains a global minimum on the set $X \times \mathbb{R}$.
\end{theorem}

\begin{proof}
The second and third assumptions of Theorem~\ref{thm:Isoperimetric_ExAugmLagr} guarantee that the functionals
$\widehat{\mathcal{I}}_0$ and $\widehat{\mathcal{I}}_1$ are twice continuously Fr\'{e}chet differentiable. Furthremore, 
the growth conditions \eqref{eq:MultiDimGrowthCond} along with the essential boundedness of the second order
derivatives of $f_0$ and $f_1$ in $(u, \xi)$ imply that the the functionals $\widehat{\mathcal{I}}_0$ and
$\widehat{\mathcal{I}}_1$ and their first and second order derivatives are bounded on any set $V \subset X$ such that
the set $\{ u = \mathcal{A} v \mid v \in V \}$ is bounded in the Sobolev space $W^{1, 2}(E; \mathbb{R}^d)$. In
particular, for any $\gamma > 0$ they are bounded on the set
\[
  V = \Big\{ v \in X \Big\{ v \in X \Bigm| 
  \widehat{\mathcal{I}}_0(v) + r(\gamma) \big( \widehat{\mathcal{I}}_1(v) - \zeta)^2 \le gamma \Big\},
\]
since $\overline{u} + \{ u = \mathcal{A} v \mid v \in V \} \subset Z_{r(\gamma)}(\gamma)$ and by our assumption the set
$Z_{r(\gamma)}(\gamma)$ is bounded in $W^{1, 2}(E; \mathbb{R}^d)$.

In turn, the assumption on $\| \proj_X P_1(v) \|_2$ ensures that for any $\gamma > 0$ there exists $\sigma(\gamma) > 0$
such that $\sigma_{\max}(Q(v)) \ge \sigma(\gamma)$ for all $v \in V$. Hence if $\mathscr{L}(\cdot, c_k)$ attains a
global minimum on the set $X \times \mathbb{R}$ for some increasing unbounded sequence $\{ c_k \}$, then by
Theorem~\ref{THM:COMPLETE_EXACTNESS} the augmented Lagrangian $\mathscr{L}(v, \lambda, c)$ is completely exact on the
set $S_c(\gamma)$. 

Conversely, if the augmented Lagrangian is completely exact, then by the definition of complete exactness it must attain
a global minimum on $X \times \mathbb{R}$ for any sufficiently large $c > 0$.
\end{proof}

\begin{remark}
The sublevel set \eqref{eq:IsoPenFuncSublevelSet} is bounded, in particular, if there exist $C_1 > 0$, $C_2 > 0$, 
$q \in [1, p)$, and $\eta \in L^1(E)$ such that
\[
  f_0(u, \xi, x) \ge C_1 |\xi|^2 + C_2 |u|^q + \eta(x)
\]
for a.e. $x \in E$ and all $(u, \xi) \in \mathbb{R}^d \times \mathbb{R}^{d \times n}$, since this growth condition
ensures that the functional $\mathcal{I}_0(\cdot)$ is coercive (see, e.g. \cite[Thm.~3.30]{Dacorogna}).
\end{remark}

%

\subsection{Problems with nonholonomic equality constraints}
\label{subsect:Nonholonomic}

In this subsection, we study an exact augmented Lagrangian for multidimensional variational problems with nonholonomic
equality constraints (which can be viewed as first order PDE constraints). Let, as in the previous section, 
$E = \prod_{i = 1}^n (a_i, b_i) \subset \mathbb{R}^n$ be a bounded open box and 
$\overline{u} \in W^{1, 2}(E; \mathbb{R}^d)$ be a given function. Consider the following variational problem with
nonholonomic equality (first order PDE) constraints:
\begin{equation} \label{prob:Nonholonomic}
\begin{split}
  &\min_{u \in \overline{u} + W^{1, 2}_0(E; \mathbb{R}^d)} \enspace 
  \mathcal{I}(u) = \int_E f(u(x), \nabla u(x), x) \, dx
  \\
  &\text{subject to} \enspace F(u(x), \nabla u(x), x) = 0 \text{ for a.e. } x \in E.
\end{split}
\end{equation}
Here $f \colon \mathbb{R}^d \times \mathbb{R}^{d \times n} \times E \to \mathbb{R}$, $f = f(u, \xi, x)$,
and $F \colon \mathbb{R}^d \times \mathbb{R}^{d \times n} \times E \to \mathbb{R}^{\ell}$, $F = F(u, \xi, x)$, are
Carath\'{e}odory functions that are differentiable in $u$ and $\xi$ and their partial derivatives in $u$ and $\xi$ are
Carath\'{e}odory functions as well.

Suppose that the function $f$ satisfies the growth conditions of the form \eqref{eq:MultiDimGrowthCond}, while the
function $F$ satisfies the following growth conditions. Namely, there exist $C > 0$ and an a.e. nonnegative function
$\eta \in L^2(E)$ such that
\begin{equation} \label{eq:NemytskiiGrowthCond}
  |F(u, \xi, x)| \le C \big( |u| + |\xi| \big) + \eta(x), \quad
  \max\big\{ |\nabla_u F(u, \xi, x)|, |\nabla_{\xi} F(u, \xi, x)| \big\} \le C.
\end{equation}
These assumptions guarantee that the functional $\mathcal{I}(\cdot)$ is continuously Fr\'{e}chet differentiable,
while the Nemytskii operator $\mathcal{F}(u) = F(u(\cdot), \nabla u(\cdot), \cdot)$ maps the Sobolev space
$W^{1, 2}(E; \mathbb{R}^d)$ to $L^2(E; \mathbb{R}^{\ell})$ and is continuously Fr\'{e}chet differentiable as
well (see, e.g. \cite{AppellZabrejko}). The Fr\'{e}chet derivative of $\mathcal{I}$ has the form
\eqref{eq:MultiDimVarFuncDeriv}, while the Fr\'{e}chet derivative of the Nemytskii operator $\mathcal{F}$ is defined as
\begin{equation} \label{eq:NemytskiiOperatorDeriv}
  D \mathcal{F}(u)[w] = \nabla_u F(u(\cdot), \nabla u(\cdot), \cdot) w(\cdot)
  + \sum_{i = 1}^n \frac{\partial F}{\partial \xi^i}(u(\cdot), \nabla u(\cdot), \cdot) \nabla w_i(\cdot)
\end{equation}
for any $u, w \in W^{1, 2}(E; \mathbb{R}^d)$, where 
$\partial/\partial \xi^i = (\partial/\partial \xi_{i1}, \ldots, \partial / \partial \xi_{in})$ and
$w = (w_1, \ldots, w_d)$.

In order to define an exact augmented Lagrangian for problem \eqref{prob:Nonholonomic} we will use the same technique as
in the case of isoperimetric constraints, that is, we impose an additional assumption on the regularity of minimizers
of problem \eqref{prob:Nonholonomic}. Namely, suppose that all KKT points $(u_*, \lambda_*)$ of problem 
\eqref{prob:Nonholonomic} satisfy the condition $u_* \in \overline{u} + MW_0^{n, 2}(E; \mathbb{R}^d)$. Then taking into
account Theorem~\ref{thm:MixedSobolev} one can conclude that problem \eqref{prob:Nonholonomic} is equivalent to the
following one:
\begin{equation} \label{prob:NonholonomicDerivativeSpace}
\begin{split}
  \min_{v \in X} \enspace 
  \widehat{\mathcal{I}}(v) := \mathcal{I}(\overline{u} + \mathcal{A} v) 
  \enspace
  \text{subject to} \enspace \widehat{\mathcal{F}}(u)
  := \mathcal{F}(\overline{u} + \mathcal{A} v)  = 0,
\end{split}
\end{equation}
where the Hilbert space $X$ is defined in \eqref{eq:DerivSpace_MultiDim}. Arguing in the same way as in previous
section, one obtains that the gradient of the functional $\widehat{\mathcal{I}}$ at a point $v \in X$ has the form
$\proj_X P(v)$, where $P(v)$ is defined according to the equality \eqref{eq:MultidimCV_Grad} with $f_i$ replaced by
$f$. In turn, the Nemytskii operator $\widehat{\mathcal{F}}(\cdot)$ maps $X$ to $L^2(E; \mathbb{R}^{\ell})$ and is
continuously Fr\'{e}chet differentiable. Applying \eqref{eq:NemytskiiOperatorDeriv} and integrating by parts one obtains
that the Fr\'{e}chet derivative of $\widehat{\mathcal{F}}(\cdot)$ has the form 
$D \widehat{\mathcal{F}}(v)[h] = P_F(v) h$ for any $v, h \in X$, where 
\begin{align*}
  P_F(v)(x) = &(-1)^n \int_{x_1}^{b_1} \ldots \int_{x_n}^{b_n} \nabla_u F(u(s), \nabla u(s), s) \, ds
  \\
  &\begin{aligned}[t]
    + (-1)^{n - 1} \sum_{i = 1}^n 
    &\int_{x_n}^{b_n} \ldots \int_{x_{i + 1}}^{b_{i + 1}} \int_{x_{i - 1}}^{b_{i - 1}} \ldots \int_{x_1}^{b_1}
    \\ 
    &\frac{\partial F}{\partial \xi_i} (u(s^i), \nabla u(s^i), s^i) \, ds_1 \ldots ds_{i - 1} ds_{i + 1} \ldots ds_n
  \end{aligned}
\end{align*}
for a.e. $x \in E$, where $u = \overline{u} + \mathcal{A} v$. Note that the growth conditions
\eqref{eq:NemytskiiGrowthCond} imply that $P_F(v) \in L^{\infty}(E; \mathbb{R}^{\ell \times d})$ for any $v \in X$.

The exact augmented Lagrangian $\mathscr{L}(v, \lambda, c)$ with $\lambda \in L^2(E; \mathbb{R}^{\ell})$ for
problem \eqref{prob:NonholonomicDerivativeSpace} is defined according to equalities \eqref{eq:ExactAugmLagrDef} and
\eqref{eq:KKTPenTerm} as follows:
\begin{align*}
  \mathscr{L}(v, \lambda, c) = &\int_E f(u(x), \nabla u(x), x) \, dx 
  + \int_E \langle \lambda(x), F(u(x), \nabla u(x), x) \rangle \, dx
  \\
  &+ \frac{c}{2} (1 + \| \lambda \|_2^2) \int_E |F(u(x), \nabla u(x), x)|^2 \, dx
  \\
  &+ \frac{1}{2} \int_E \big| P_F(v)(x)\big( \proj_X P(v)(x) + P_F(v)(x)^* \lambda(x) \big) \Big|^2 \, dx  
\end{align*}
where $u = \overline{u} + \mathcal{A} v$ and $P_F(v)(x)^*$ is the transpose of the matrix $P_F(v)(x)$.

\begin{remark}
Note that in this section we assume that Lagrange multipliers $\lambda$ belong to $L^2(E; \mathbb{R}^{\ell})$. However,
in the case multidimensional variational problems even multipliers $\lambda \in L^1(E; \mathbb{R}^{\ell})$ do not always
exist (see \cite[Sect.~2.3]{GiaquintaHildebrandt}). Therefore, in order to make the general theory of exact augmented
Lagrangian applicable to variational problems with nonholonomic constraints, in this section we consider only those
problems with nonholonomic constraints for which multipliers $\lambda \in L^2(E; \mathbb{R}^{\ell})$ exist for all
globally optimal solutions (see Remark~\ref{rmrk:BasicExistenceAssumption}). In addition, it should be underlined that
in the formulations of the theorems below a pair $(u_*, \lambda_*)$ is a KKT point of problem \eqref{prob:Nonholonomic},
only if $\lambda_* \in L^2(E; \mathbb{R}^{\ell})$. Thus, if there is a point $u_*$ satisfying KKT optimality conditions
for problem \eqref{prob:Nonholonomic} with a multiplier $\lambda_*$ that does not belong to $L^2(E; \mathbb{R}^{\ell})$,
then within our theory this point $u_*$ is \textit{not} stationary and the pair $(u_*, \lambda_*)$ is \textit{not} a KKT
point of problem \eqref{prob:Nonholonomic}.
\end{remark}

Let us provide sufficient conditions for the exactness of the augmented Lagrangian $\mathscr{L}(v, \lambda, c)$. For the
sake of simplicity, we will formulate these conditions under the assumption that the mapping $F$ is affine in $(u, \xi)$
(that is, problem \eqref{prob:Nonholonomic} is a variational problem with constraints defined by a system of linear
PDE), since this is the simplest assumption under which the corresponding Nemytskii operator $\mathcal{F}(\cdot)$ is
twice continuously Fr\'{e}chet differentiable.

\begin{theorem} \label{thrm:Nonholonomic}
Let the following assumptions be valid:
\begin{enumerate}
\item{for all KKT points $(u_*, \lambda_*)$ of problem \eqref{prob:Nonholonomic} one has 
$u_* \in \overline{u} + MW_0^{n, 2}(E; \mathbb{R}^d)$;}

\item{the function $f$ is twice differentiable in $(u, \xi)$, satisfies the growth conditions
\eqref{eq:MultiDimGrowthCond}, and its second order derivatives are Carath\'{e}odory functions that are essentially
bounded on $\mathbb{R}^d \times \mathbb{R}^{d \times n} \times \mathbb{R}^n$;}

\item{the augmented Lagrangian $\mathscr{L}(v, \lambda, c)$ is weakly sequentially l.s.c. in $(v, \lambda)$;}

\item{the function $F$ has the form $F(u, \xi, x) = A(x) u + \sum_{i = 1}^n B_i(x) \xi_i + D(x)$ for some essentially
bounded measurable functions $A$, $B_i$ and $D$, and $P_F(\cdot) \ne 0$ (if the functions $B_i$ are continuously
differentiable, it is sufficient to suppose that $A(x) - \sum_{i = 1}^n D_i B_i(x) \ne 0$ on a set of positive
measure);}

\item{for any $\gamma \in \mathbb{R}$ there exists $c > 0$ such that the set 
\[
  \Omega_c(\gamma) = \Big\{ u \in \overline{u} + MW_0^{n, 2}(E; \mathbb{R}^d) \Bigm|
  \mathcal{I}_0(u) + c \| F(u(\cdot), \nabla u(\cdot), \cdot) \|_2^2 \le \gamma \Big\}
\]
is bounded in $MW^{n, 2}(E; \mathbb{R}^d)$.}
\end{enumerate}
Then the augmented Lagrangian $\mathscr{L}(v, \lambda, c)$ is globally exact and for any $\gamma \in \mathbb{R}$ there
exists $c(\gamma) > 0$ such that for all $c \ge c(\gamma)$ the following statements hold true:
\begin{enumerate}
\item{the sublevel set $S_c(\gamma)$ is bounded in $X$;}

\item{if $(v_*, \lambda_*) \in S_c(\gamma)$ is a point of local minimum of $\mathscr{L}(v, \lambda, c)$, then 
$u_* = \overline{u} + \mathcal{A} v_*$ is locally optimal solution of problem \eqref{prob:Nonholonomic};}

\item{a pair $(v_*, \lambda_*) \in S_c(\gamma)$ is a stationary point of $\mathscr{L}(v, \lambda, c)$ if and only if
$(u_*, \lambda_*)$ with $u_* = \overline{u} + \mathcal{A} v_*$ is a KKT point of problem
\eqref{prob:Nonholonomic} such that $\mathcal{I}(u_*) \le \gamma$.
}
\end{enumerate}
\end{theorem}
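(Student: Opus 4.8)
The plan is to verify the six hypotheses of the abstract exactness result from the first part of our study, namely \cite[Cor.~3.6 and Thms.~5.3 and 5.7]{Dolgopolik_ExAugmLagr}, applied to the reformulated problem \eqref{prob:NonholonomicDerivativeSpace} in the space of derivatives $X$. The equivalence between problem \eqref{prob:Nonholonomic} and problem \eqref{prob:NonholonomicDerivativeSpace} is exactly what the first assumption, together with Theorem~\ref{thm:MixedSobolev}, provides: on KKT points the solutions are regular enough to lie in $\overline{u} + MW_0^{n,2}(\Omega;\mathbb{R}^d)$, and the operator $\mathcal{A}$ gives a linear isomorphism between $X$ and $MW_0^{n,2}(\Omega;\mathbb{R}^d)$. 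So it remains to check the smoothness, weak lower semicontinuity, coercivity, and uniform-positive-definiteness conditions that feed into those theorems.

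First I would record that the second assumption guarantees $\widehat{\mathcal{I}}$ is twice continuously Fr\'echet differentiable with first and second derivatives bounded on bounded subsets of $X$, by the same Nemytskii-operator argument used in the previous two subsections. The fourth assumption (that $F$ is affine in $(u,\xi)$ with essentially bounded coefficients) ensures that the Nemytskii operator $\widehat{\mathcal{F}}$ is itself affine, hence trivially twice continuously differentiable with $D\widehat{\mathcal{F}}(v)[h] = P_F(v)h$ where the matrix-valued function $P_F(v) \in L^\infty(\Omega;\mathbb{R}^{\ell\times d})$ is in fact independent of $v$ (call it $P_F$); this is why the affine hypothesis is imposed. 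The third assumption directly supplies the weak sequential lower semicontinuity of $\mathscr{L}$ in $(v,\lambda)$, and the fifth assumption directly supplies the boundedness of the sublevel set $Z_c(\gamma)$ in $MW^{n,2}(\Omega;\mathbb{R}^d)$, equivalently of the corresponding set of $v$ in $X$.

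The substantive step is the sixth assumption: I would show that $P_F \ne 0$ implies that the function $Q(v)[\cdot]$ associated with \eqref{prob:NonholonomicDerivativeSpace} is uniformly positive definite on every bounded subset of $X$. By definition \eqref{eq:Q_def} and the fact that there are no inequality constraints here, $Q(v)[\lambda] = \tfrac12 \| P_F(P_F^*\lambda)\|_2^2 = \tfrac12 \|\widehat{\mathcal{F}}'\widehat{\mathcal{F}}'^*\lambda\|_2^2$, so by \cite[Lemma~3.3]{Dolgopolik_ExAugmLagr} positive definiteness is equivalent to surjectivity of $D\widehat{\mathcal{F}}(v)\colon X \to L^2(\Omega;\mathbb{R}^\ell)$, i.e. of the operator $h \mapsto P_F h$ followed by — one must be careful here — there is no projection onto $X$ on the range side, so one should argue directly that $P_F P_F^*\colon L^2(\Omega;\mathbb{R}^\ell)\to L^2(\Omega;\mathbb{R}^\ell)$ is boundedly invertible. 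Pointwise, $P_F(x)P_F(x)^*$ is a symmetric positive semidefinite $\ell\times\ell$ matrix whose smallest eigenvalue is positive a.e. precisely when $P_F(x)$ has full row rank; since $P_F$ is constant in $v$, essentially bounded, and nonzero, this gives a uniform lower bound $\sigma > 0$ independent of $v$, which is exactly uniform positive definiteness of $Q$. For the parenthetical claim one computes $P_F(x) = A(x) - \sum_{i=1}^n D_i B_i(x)$ after the integration by parts in the formula defining $P_F$, using that the antiderivative-type integrals telescope for affine integrands, so $P_F \equiv A - \sum_i D_i B_i$ and the hypothesis reduces as stated.

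I expect the main obstacle — really the only delicate point — to be this correspondence between $P_F \ne 0$ and the uniform positive definiteness of $Q$, because $Q$ measures invertibility of $D\widehat{\mathcal{F}}(v)D\widehat{\mathcal{F}}(v)^*$ on all of $L^2(\Omega;\mathbb{R}^\ell)$, not merely nontriviality of $P_F$ at a single point; one must verify that ``$P_F(x)$ is a nonzero matrix for a.e. $x$'' (equivalently, has full row rank a.e., which is what the one-equation-per-constraint structure and linear independence built into the affine model give) is enough, and that the essential boundedness of the coefficients prevents the smallest pointwise eigenvalue from degenerating to zero in an $L^\infty$ sense. Once that is settled, the three conclusions — boundedness of $S_c(\gamma)$, the local-minimum correspondence, and the stationary-point/KKT correspondence — follow verbatim from \cite[Cor.~3.6 and Thms.~5.3 and 5.7]{Dolgopolik_ExAugmLagr} after translating back through $u = \overline{u} + \mathcal{A}v$, exactly as in the proofs of Theorems~1 and \ref{thm:Isoperimetric_ExAugmLagr}.
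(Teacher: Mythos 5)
The paper does not actually write out a proof of this theorem: it records only that ``the proof largely repeats the proof of Theorem~\ref{thm:Isoperimetric_ExAugmLagr} and consists in verifying that the assumptions of the theorem ensure that the assumptions of \cite[Cor.~3.6 and Thms.~5.3 and 5.7]{Dolgopolik_ExAugmLagr} are satisfied.'' Your overall plan is therefore exactly the intended one, and your handling of assumptions 1--5 (equivalence of the three formulations via Theorem~\ref{thm:MixedSobolev}, twice continuous differentiability with bounded derivatives from assumptions 2 and 4, weak sequential l.s.c.\ and boundedness of $Z_c(\gamma)$ taken directly from assumptions 3 and 5) matches what the omitted proof would contain.

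The gap is in the one step that carries all the content: passing from assumption 6 to uniform positive definiteness of $Q$. Three things go wrong in your argument. First, $P_F(v)$ is \emph{not} the pointwise expression $A(x) - \sum_{i=1}^n D_i B_i(x)$; by its defining formula it is an iterated integral $\int_{x_1}^{b_1}\!\cdots\!\int_{x_n}^{b_n}$ of the coefficients, and the parenthetical in assumption 6 relates the two only in the sense of ``not identically zero'' (via the analogue of \cite[Prop.~5]{Dolgopolik_MultidimCalcVar}, as in remark (iv) after Theorem~\ref{thm:Isoperimetric_ExAugmLagr}), not as a pointwise identity. Second, the adjoint of $h \mapsto P_F(v)h$ as an operator on $X$ is $\lambda \mapsto \proj_X\big(P_F(v)^*\lambda\big)$, so $Q(v)[\lambda] = \tfrac12\|P_F(v)\,\proj_X(P_F(v)^*\lambda)\|_2^2$; the projection sits on the \emph{domain} side and cannot be discarded, contrary to your remark that only the range side matters. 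Third, and most seriously, ``$P_F(v)\neq 0$ as an element of $L^\infty$'' does not imply that $P_F(v)(x)$ has full row rank a.e., let alone that the smallest eigenvalue of $P_F(v)(x)P_F(v)(x)^*$ is essentially bounded away from zero: a nonzero essentially bounded matrix function may vanish on a set $E$ of positive measure, and then any $\lambda$ supported in $E$ yields $Q(v)[\lambda]=0$ while $\lambda\neq 0$, so your pointwise spectral argument does not close. To obtain $Q(v)[\lambda]\ge\sigma\|\lambda\|_2^2$ one must establish surjectivity of $D\widehat{\mathcal{F}}(v)\colon X\to L^2(\Omega;\mathbb{R}^\ell)$ with a bound uniform over bounded sets of $v$ (the analogue of the explicit lower bound $\|\proj_X P_1(v)\|_2\ge\sigma$ assumed in the isoperimetric theorem), and this requires a genuine additional argument --- or a stronger reading of assumption 6 --- that your proposal does not supply.
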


The proof of this theorem largely repeats the proof of Theorem~\ref{thm:Isoperimetric_ExAugmLagr} and consists in
verifying that the assumptions of the theorem ensure that the assumptions of 
\cite[Cor.~3.6 and Thms.~5.3 and 5.7]{Dolgopolik_ExAugmLagr} are satisfied. We omit it for the sake of shortness.

\begin{remark}
Let us note that the augmented Lagrangian $\mathscr{L}(v, \lambda, c)$ is weakly sequentially l.s.c. in $(v, \lambda)$,
in particular, if $n = 1$, the function $F = F(u, \xi, x)$ does not depends on $\xi$ (i.e. the constraints are
holonomic), and the function $f$ has the form
\[
  f(u, \xi, x) = \langle \xi, h_2(x) \xi \rangle + \langle h_1(x), \xi) \rangle + h_0(u, x)
\]
for some functions $h_2$, $h_1$, and $h_0$ such that the matrix $h_2(x)$ is positive semidefinite for a.e. 
$x \in E$. Under these assumptions the functional $\mathcal{I}(\cdot)$ is weakly sequentially lower semicontinuous
(provided $h_2$ satisfies appropriate growth conditions) by \cite[Cor.~3.24]{Dacorogna}, the map $\proj_X P(\cdot)$ is
weakly sequentially continuous, the map $P_F(\cdot)$ does not depends on $v$, while the Nemytskii operator 
$u \mapsto F(u(\cdot), \nabla u(\cdot), \cdot)$ maps weakly continuous sequences to strongly continuous ones by the
Rellich-Kondrachov theorem (see, e.g. \cite[Thm.~6.2]{Adams}). Combining these facts one can check that the augmented
Lagrangian is indeed weakly sequentially l.s.c. in $(v, \lambda)$. The question of when the augmented Lagrangian is
weakly sequentially l.s.c. in the case of nonholonomic constraints is an interesting open problem.
\end{remark}

Let us also provide sufficient conditions for the complete exactness of the augmented Lagrangian based on
Theorem~\ref{THM:COMPLETE_EXACTNESS}. The use of this theorem allows one to avoid very restrictive and difficult to
verify assumptions on the weak sequential lower semicontinuity of the augmented Lagrangian and the boundedness of the
sublevel set $\Omega_c(\gamma)$ in the space $MW^{n, 2}(E; \mathbb{R}^d)$.

\begin{theorem}
Let the following assumptions be valid:
\begin{enumerate}
\item{for all KKT points $(u_*, \lambda_*)$ of problem \eqref{prob:Nonholonomic} one has 
$u_* \in \overline{u} + MW_0^{n, 2}(E; \mathbb{R}^d)$;}

\item{the function $f$ is twice differentiable in $(u, \xi)$, satisfies the growth conditions
\eqref{eq:MultiDimGrowthCond}, and its second order derivatives are Carath\'{e}odory functions that are essentially
bounded on $\mathbb{R}^d \times \mathbb{R}^{d \times n} \times \mathbb{R}^n$;}

\item{the function $F$ has the form $F(u, \xi, x) = A(x) u + \sum_{i = 1}^n B_i(x) \xi_i + D(x)$ for some essentially
bounded measurable functions $A$, $B_i$ and $D$, and $P_F(\cdot) \ne 0$ (if the functions $B_i$ are continuously
differentiable, it is sufficient to suppose that $A(x) - \sum_{i = 1}^n D_i B_i(x) \ne 0$ on a set of positive
measure);}

\item{for any $\gamma \in \mathbb{R}$ there exists $r = r(\gamma) > 0$ such that the set
\[
  Z_r(\gamma) := \Big\{ u \in W^{1, 2}_0(E; \mathbb{R}^d) \Bigm| 
  \mathcal{I}(\overline{u} + u) + r \| F(u(\cdot), \nabla u(\cdot), \cdot) \|_2^2 \le \gamma \}
\]
is bounded in $W^{1, 2}(E; \mathbb{R}^d)$.}
\end{enumerate}
Then for the augmented Lagrangian $\mathscr{L}(v, \lambda, c)$ to be completely exact on the set $S_c(\gamma)$ for any 
$\gamma \in \mathbb{R}$ it is necessary and sufficient that there exists an unbounded increasing sequence 
$\{ c_k \} \subset (0, + \infty)$ such that for any $k \in \mathbb{N}$ the augmented Lagrangian 
$\mathscr{L}(\cdot, c_k)$ attains a global minimum on the set $X \times L^2(E; \mathbb{R}^{\ell})$.
\end{theorem}

The proof of this theorem largely repeats the proof of Theorem~\ref{thm:Isoperimetric_ExAugmLagr_Stronger} and is based
on verifying the assumptions of Theorem~\ref{THM:COMPLETE_EXACTNESS}. Therefore, we omit its proof for the sake of
shortness.

%

\section{An application to optimal control}
\label{sect:OptimalControl}

Below we use standard definitions and results on control problems for linear evolution equations that can be found in
\cite{TucsnakWeiss}. Let $\mathscr{H}$ and $\mathscr{U}$ be complex Hilbert spaces, $\mathbb{T}$ be a strongly
continuous semigroup on $\mathscr{H}$ with generator $\mathcal{A} \colon \mathcal{D}(A) \to \mathscr{H}$, where
$\mathcal{D}(A)$ is the domain of $A$. Let also $\mathcal{B}$ be an \textit{admissible control operator} for
$\mathbb{T}$ (see \cite[Def.~4.2.1]{TucsnakWeiss}). 

Consider the following fixed-endpoint optimal control problem:
\begin{equation} \label{prob:LinEvolEq}
\begin{split}
  &\min_{(x, u)} \enspace \frac{1}{2} \int_0^T \| u(t) \|_{\mathscr{U}}^2 \, dt
  \\
  &\text{subject to} \enspace \dot{x}(t) = \mathcal{A} x(t) + \mathcal{B} u(t), \quad t \in [0, T], \quad
  x(0) = x_0, \quad x(T) = x_T.
\end{split}
\end{equation}
Here $T > 0$ and $x_0, x_T \in \mathscr{H}$ are fixed, and controls $u(\cdot)$ belong to the space 
$L^2((0, T); \mathscr{U})$.

Our aim is to convert the fixed-endpoint problem \eqref{prob:LinEvolEq} into an equivalent free-endpoint problem for an
exact augmented Lagrangian. To define this augmented Lagrangian, introduce the so-called \textit{input map} 
$F_t u = \int_0^t \mathbb{T}_{t - \tau} \mathcal{B} u(\tau) d \tau$, $t \ge 0$, corresponding to the pair
$(\mathcal{A}, \mathcal{B})$. The assumption that $\mathcal{B}$ is an admissible control operator for $\mathbb{T}$
guarantees that for any $t \in T$ the input map $F_t$ is a bounded linear operator from $L^2((0, T); \mathscr{U})$ to
$\mathscr{H}$ by \cite[Prop.~4.2.2]{TucsnakWeiss}. Moreover, by \cite[Prop.~4.2.5]{TucsnakWeiss} for any 
$x_0 \in \mathscr{H}$ the initial value problem
\begin{equation} \label{eq:LEE_InitValProb}
  \dot{x}(t) = \mathcal{A} x(t) + \mathcal{B} u(t), \quad x(0) = x_0
\end{equation}
has a unique solution $x \in C([0, T]; \mathscr{H})$ given by
\begin{equation} \label{eq:LEE_ExplicitSol}
  x(t) = \mathbb{T}_t x_0 + F_t u \quad \forall t \in [0, T].
\end{equation}
Therefore, problem \eqref{prob:LinEvolEq} can be rewritten as follows:
\begin{equation} \label{prob:LinEvolEq_Equiv}
\begin{split}
  \min_{u \in L^2((0, T); \mathscr{U})} \enspace \frac{1}{2} \int_0^T \| u(t) \|_{\mathscr{U}}^2 \, dt
  \quad \text{subject to} \quad F_T u = \widehat{x}_T,
\end{split}
\end{equation}
where $\widehat{x}_T = x_T - \mathbb{T}_T x_0$. An exact augmented Lagrangian for this problem has the form
\begin{align*}
  \mathscr{L}(u, \lambda, c) = &\frac{1}{2} \int_0^T \| u(t) \|_{\mathscr{U}}^2 \, dt 
  + \langle \lambda, F_T u - \widehat{x}_T \rangle
  \\
  &+ \frac{c}{2} ( 1 + \| \lambda \|_{\mathscr{H}}^2 ) \big\| F_T u - \widehat{x}_T \big\|^2
  + \frac{1}{2} \Big\| F_T \big( u + F_T^* \lambda \big) \Big\|^2,
\end{align*}
where $\lambda \in \mathscr{H}$ is a Lagrange multiplier corresponding to the end-point constraint $x(T) = x_T$ and
$F_T$ is the adjoint operator of $F_T$.

\begin{remark}
Note that to compute $F_T v$ for some $v \in L^2((0, T); \mathscr{U})$ one needs to find a solution of the initial value
problem
\[
  \dot{x}(t) = \mathcal{A} x(t) + \mathcal{B} v(t), \quad x(0) = 0
\]
and set $F_T v = x(T)$. Let us also point out that $(F_T^* \lambda)(t) = \mathcal{B}^* \mathbb{T}_{T - t}^* \lambda$ for
any $t \in [0, T]$ and $\lambda \in \mathscr{H}$. If the operator $\mathcal{A}$ is self-adjoint, then 
$\mathbb{T}_t = \mathbb{T}_t^*$ for all $t \ge 0$ and $\mathbb{T}_{T - t}^* \lambda = y(T - t)$ for any $t \in [0, T]$,
where $y$ is a
solution of the initial value problem $\dot{y}(t) = \mathcal{A} y(t)$, $y(0) = \lambda$ (see
\eqref{eq:LEE_ExplicitSol}).
\end{remark}

Let us provide sufficient conditions for the exactness of the augmented Lagrangian $\mathscr{L}(u, \lambda, c)$, which
ensure that the problem
\begin{equation} \label{prob:OptControl_AugmLagr}
  \min_{(u, \lambda) \in L^2((0, T); \mathscr{U}) \times \mathscr{H}} \mathscr{L}(u, \lambda, c)
\end{equation}
is equivalent to problem \eqref{prob:LinEvolEq} for any $c$ greater than some $c_* > 0$. Note that problem
\eqref{prob:OptControl_AugmLagr} can be rewritten as the following free-endpoint optimal control problem:
\begin{align*}
  &\min_{(x, u, \lambda)} \enspace 
  \begin{aligned}[t]
    &\frac{1}{2} \int_0^T \| u(t) \|_{\mathscr{U}}^2 \, dt + \langle \lambda, x(T) - x_T \rangle
    + \frac{c}{2} (1 + \| \lambda \|_{\mathscr{H}}^2) \| x(T) - x_T \|^2
    \\
    &+ \frac{1}{2} \Big\| F_T \big( u + F_T^* \lambda \big) \Big\|^2
  \end{aligned}
  \\
  &\text{subject to} \enspace \dot{x}(t) = \mathcal{A} x(t) + \mathcal{B} u(t), \quad t \in [0, T], \quad
  x(0) = x_0.
\end{align*}
To conveniently formulate sufficient conditions for the equivalence of this problem and problem \eqref{prob:LinEvolEq},
recall that system \eqref{eq:LEE_InitValProb} is called \textit{exactly controllable} using $L^2$-controls (see, e.g.
\cite{GugatZuazua}), if for any initial state $x_0 \in \mathscr{H}$ and any final state $x_T \in \mathscr{H}$ one can
find a control $u \in L^2((0, T); \mathscr{U})$ such that the corresponding solution $x(\cdot)$ of the initial value
problem \eqref{eq:LEE_InitValProb} satisfies the equality $x(T) = x_T$. Particular examples of exactly controllable
systems can be found in \cite{TucsnakWeiss,BardoLebeauRauch,GugatZuazua}.

\begin{theorem}
Let system \eqref{eq:LEE_InitValProb} be exactly controllable using $L^2$-controls. Then the augmented Lagrangian
$\mathscr{L}(u, \lambda, c)$ is globally exact and for any $\gamma \in \mathbb{R}$ there exists $c(\gamma) > 0$ such
that for all $c \ge c(\gamma)$ the following statements hold true:
\begin{enumerate}
\item{the sublevel set $S_c(\gamma)$ is bounded;}

\item{if a point $(u_*, \lambda_*) \in S_c(\gamma)$ is a stationary point of $\mathscr{L}(u, \lambda, c)$, then $u_*$ is
a globally optimal solution of the problem \eqref{prob:LinEvolEq_Equiv} and $(u_*, \lambda_*)$ is a KKT-point of this
problem.
}
\end{enumerate}
\end{theorem}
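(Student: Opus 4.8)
The plan is to view problem \eqref{prob:LinEvolEq}, in its equivalent form \eqref{prob:LinEvolEq_Equiv}, as a special case of the abstract constrained problem $(\mathcal{P})$ with $X = L^2((0, T); \mathscr{U})$, $H = \mathscr{H}$, objective $f(u) = \frac{1}{2}\int_0^T \| u(t) \|_{\mathscr{U}}^2\,dt$, equality‑constraint map $F(u) = F_T u - \widehat{x}_T$ (affine and bounded), and no inequality constraints ($m = 0$); then the claims follow from \cite[Cor.~3.6 and Thms.~5.3 and 5.7]{Dolgopolik_ExAugmLagr} once their hypotheses are checked, exactly as in the proofs of Theorems~\ref{thm:Isoperimetric_ExAugmLagr} and \ref{thrm:Nonholonomic}. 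Statement~2 is first obtained for \eqref{prob:LinEvolEq_Equiv} and then transported back to \eqref{prob:LinEvolEq} through the explicit representation \eqref{eq:LEE_ExplicitSol}.

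Most of the required hypotheses are immediate here. The objective is a continuous quadratic form with $\nabla f(u) = u$ and $\nabla^2 f \equiv \mathrm{Id}$, and $F$ is affine with constant derivative $DF(u) = F_T$; hence $f$ and $F$ are infinitely differentiable with all derivatives bounded on bounded subsets of $X$. The penalty function $u \mapsto \frac{1}{2}\| u \|_2^2 + c\| F_T u - \widehat{x}_T \|^2$ is strongly convex, so every sublevel set $Z_c(\gamma)$ is bounded for each $c \ge 0$. Thus the only two points that actually require an argument are (i) uniform positive definiteness of the function $Q$ on bounded sets and (ii) weak sequential lower semicontinuity of $\mathscr{L}$ in $(u, \lambda)$ for all sufficiently large $c$.

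For (i) I would use that, since $m = 0$, the function \eqref{eq:Q_def} collapses to $Q(u)[\lambda] = \frac{1}{2}\| F_T F_T^* \lambda \|^2$, which is independent of $u$. The key observation is that exact controllability using $L^2$‑controls is precisely surjectivity of $F_T \colon L^2((0, T); \mathscr{U}) \to \mathscr{H}$: by \eqref{eq:LEE_ExplicitSol} one has $x(T) = \mathbb{T}_T x_0 + F_T u$, so reaching an arbitrary $x_T$ from an arbitrary $x_0$ amounts to solving $F_T u = x_T - \mathbb{T}_T x_0$ for an arbitrary right‑hand side. By the open mapping theorem this is equivalent to $F_T^*$ being bounded below, $\| F_T^* \lambda \| \ge \delta \| \lambda \|$ for some $\delta > 0$; since $F_T F_T^*$ is self‑adjoint and nonnegative, $\langle F_T F_T^* \lambda, \lambda \rangle = \| F_T^* \lambda \|^2 \ge \delta^2 \| \lambda \|^2$ yields $\| F_T F_T^* \lambda \| \ge \delta^2 \| \lambda \|$, and therefore $Q(u)[\lambda] \ge \frac{1}{2}\delta^4 \| \lambda \|^2$ for all $u$ and $\lambda$; hence $\sigma_{\max}(Q(u)) \ge \frac{1}{2}\delta^4$ uniformly on all of $X$. (In passing, surjectivity of $F_T$ also guarantees that \eqref{prob:LinEvolEq_Equiv} is feasible and, its objective being strongly convex and coercive, has a unique minimum‑norm globally optimal solution together with an associated KKT point, so the standing existence assumptions hold.)

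The hard part will be (ii). Writing $r = F_T u - \widehat{x}_T$, the terms $\frac{1}{2}\| u \|_2^2$ and $\frac{1}{2}\| F_T(u + F_T^* \lambda) \|^2 = \frac{1}{2}\| F_T u + F_T F_T^* \lambda \|^2$ are weakly sequentially l.s.c.\ in $(u, \lambda)$, being compositions of the weak‑to‑weak continuous bounded affine maps $u \mapsto u$ and $(u, \lambda) \mapsto F_T u + F_T F_T^* \lambda$ with the weakly l.s.c.\ squared norm. The genuinely delicate term is the indefinite bilinear coupling $\langle \lambda, r \rangle$: here one cannot invoke compactness of $F_T$ to upgrade the weak convergence of $F_T u_n$ to strong convergence, because a surjective bounded operator onto an infinite‑dimensional space is never compact. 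Instead I would carry out a completion‑of‑squares estimate showing that for $c$ large the whole block $\langle \lambda, r \rangle + \frac{c}{2}(1 + \| \lambda \|^2)\| r \|^2 + \frac{1}{2}\| F_T u + F_T F_T^* \lambda \|^2$ is weakly sequentially l.s.c.\ in $(u, \lambda)$, the point being that the quadratic term $\frac{c}{2}\| \lambda \|^2 \| r \|^2$ together with $\frac{1}{2}\| F_T F_T^* \lambda \|^2$ — the latter coercive in $\lambda$ by part (i) — dominate the cross term along any weakly convergent sequence. Once (i) and (ii) are in place, \cite[Cor.~3.6 and Thms.~5.3 and 5.7]{Dolgopolik_ExAugmLagr} deliver global exactness of $\mathscr{L}$ and statements~1 and~2.
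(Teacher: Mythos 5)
Your overall route coincides with the paper's: the problem is treated as an instance of $(\mathcal{P})$ with $X = L^2((0,T);\mathscr{U})$, $F(u) = F_T u - \widehat{x}_T$ and no inequality constraints; $Q(u)[\lambda] = \tfrac{1}{2}\|F_T F_T^*\lambda\|^2$ is independent of $u$ and uniformly positive definite because exact controllability is equivalent to surjectivity of $F_T$ (the paper invokes Lemma~3.3 of Part~I at this point, whereas you rederive the bound $\|F_T F_T^*\lambda\| \ge \delta^2\|\lambda\|$ directly from the open mapping theorem --- both are fine); coercivity of the objective gives boundedness of $Z_c(\gamma)$; and the conclusions then follow from Cor.~3.6 and Thms.~5.3 and 5.7 of Part~I. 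One small omission: Thm.~5.7 only yields that stationary points are KKT points, and the passage from ``KKT point'' to ``globally optimal solution'' in statement~2 uses the convexity of problem \eqref{prob:LinEvolEq_Equiv}; the paper states this step explicitly and you should too.

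The one place where your proposal has a genuine gap is precisely the step you flag as the hard part: weak sequential lower semicontinuity of $\mathscr{L}$. You correctly identify the obstruction --- $\langle\lambda_n, F_T u_n - \widehat{x}_T\rangle$ is a pairing of two only weakly convergent factors and $F_T$ cannot be compact --- but the proposed remedy, that the quadratic terms ``dominate the cross term along any weakly convergent sequence,'' is not a lower semicontinuity argument: a magnitude bound $|\langle\lambda_n, r_n\rangle| \le (\text{other terms})$ does not control the $\liminf$ of the sum from below, and the completion of squares is announced but never exhibited. The paper closes this step by a different and cleaner decomposition: (a) the block $\tfrac{1}{2}\|u\|_2^2 + \langle\lambda, F_T u - \widehat{x}_T\rangle + \tfrac{c}{2}\|F_T u - \widehat{x}_T\|^2 + \tfrac{1}{2}\|F_T(u + F_T^*\lambda)\|^2$ is continuous and convex in $(u,\lambda)$ jointly, hence weakly sequentially l.s.c. --- it is convexity, not domination, that absorbs the indefinite bilinear term; and (b) the leftover term $\|\lambda\|_{\mathscr{H}}^2\,\|F_T u - \widehat{x}_T\|^2$ is weakly sequentially l.s.c.\ as the product of two \emph{nonnegative} convex functions, via $\liminf a_n b_n \ge (\liminf a_n)(\liminf b_n)$ for nonnegative sequences. (Your instinct that largeness of $c$ may be needed here is not unreasonable: verifying positive semidefiniteness of the quadratic part of block (a) does involve the interplay of $c$, $\|F_T\|$ and the surjectivity constant.) To complete your version you must either verify this joint convexity or actually produce the sum-of-squares representation you allude to; as written, the l.s.c.\ claim is asserted rather than proved.
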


\begin{proof}
In the case of problem \eqref{prob:LinEvolEq_Equiv} for any $u \in L^2((0, T); \mathscr{U})$ the function $Q(u)$ has
the form $Q(u)[\lambda] = 0.5 \| F_T(F_T^* \lambda) \|^2$ and does not depend on $u$. Note that from the expression
\eqref{eq:LEE_ExplicitSol} for the solution of the initial value problem \eqref{eq:LEE_InitValProb} it follows that 
the exact controllability of system \eqref{eq:LEE_InitValProb} is equivalent to the surjectivity of the input map $F_T$.
Therefore, by the assumption of the theorem and \cite[Lemma~3.3]{Dolgopolik_ExAugmLagr} the function $Q(\cdot)$ is
uniformly positive definite on the entire space $L^2((0, T); \mathscr{U})$.

Since the objective function of the problem \eqref{prob:LinEvolEq_Equiv} is obviously coercive, the sublevel set
$\Omega_c(\gamma)$ is bounded for any $c \ge 0$ and $\gamma \in \mathbb{R}$. Hence applying
\cite[Cor.~3.6]{Dolgopolik_ExAugmLagr} one obtains that the sublevel set $S_c(\gamma)$ is bounded for any sufficiently
large $c = c(\gamma)$.

Observe that the augmented Lagrangian $\mathscr{L}(u, \lambda, c)$ can be represented as the sum of the convex in 
$(u, \lambda)$ function
\[
  \frac{1}{2} \int_0^T \| u(t) \|_{\mathscr{U}}^2 \, dt 
  + \langle \lambda, F_T u - \widehat{x}_T \rangle 
  + \frac{c}{2} \| F_T u - \widehat{x}_T \|^2
  + \frac{1}{2} \Big\| F_T \big( u + F_T^* \lambda \big) \Big\|^2
\]
and the function $\omega(u, \lambda) = \| \lambda \|_{\mathscr{H}}^2 \| F_T u - \widehat{x}_T \|^2$, which, as one can
readily check, is weakly sequentially l.s.c. as the product of two \textit{nonnegative} convex (and therefore
sequentially l.s.c.) functions. Consequently, for any $c \ge 0$ the augmented Lagrangian $\mathscr{L}(\cdot, c)$ is
weakly sequentially lower semicontinuous. Hence applying \cite[Thm.~5.3]{Dolgopolik_ExAugmLagr} one can conclude that
$\mathscr{L}(u, \lambda, c)$ is globally exact. 

Finally, applying \cite[Thm.~5.7]{Dolgopolik_ExAugmLagr} one obtains that for any sufficiently large $c$ stationary
points $(u_*, \lambda_*) \in S_c(\gamma)$ of $\mathscr{L}(u, \lambda, c)$ are precisely KKT-point of the problem
\eqref{prob:LinEvolEq_Equiv}. The convexity of this problem implies that any such $u_*$ is globally optimal, that is,
the last statement of the theorem holds true.
\end{proof}

\begin{remark}
Let us note that the assumption on the exact controllability of system \eqref{eq:LEE_InitValProb} was used in closely 
related results on the exactness of a nonsmooth penalty function for problem \eqref{prob:LinEvolEq}. Namely, in
\cite{GugatZuazua} it was shown that for any sufficiently large $c \ge 0$ problem \eqref{prob:LinEvolEq} is equivalent
to the free-endpoint penalized problem
\begin{align*}
  &\min_{(x, u)} \enspace \frac{1}{2} \int_0^T \| u(t) \|_{\mathscr{U}}^2 \, dt + c \| x(T) - x_T \|_{\mathscr{H}}
  \\
  &\text{subject to} \enspace \dot{x}(t) = \mathcal{A} x(t) + \mathcal{B} u(t), \quad t \in [0, T], \quad x(0) = x_0
\end{align*}
under the assumption that system \eqref{eq:LEE_InitValProb} is exactly controllable and there exists $C > 0$ such that
the control from the definition of exact controllability satisfies the inequality 
$\| u \|_{L^2((0, T); \mathscr{U})} \le C (\| x_0 \| + \| x_T \|)$. In turn, in \cite{Dolgopolik_ExPenOptControl_II} it
was shown the the assumption on the existence of $C$ can be dropped and, furthermore, the exact controllability
assumption can be replaced with the weaker assumption on the closedness of the image of the input map $F_T$. Whether the
closedness of the image of this map is sufficient for the global exactness of the augmented Lagrangian 
$\mathscr{L}(u, \lambda, c)$ is an open problem.
\end{remark}

%

%
%
%

\bibliographystyle{abbrv}  
\bibliography{ExAugmLagr_Appl_bibl}

\section*{Appendix. The proof of Theorem~\ref{THM:COMPLETE_EXACTNESS}}

We divide the proof of Theorem~\ref{THM:COMPLETE_EXACTNESS} into three lemmas.

\begin{lemma} \label{lem:BoundedMultipliers}
Under the assumptions of Theorem~\ref{THM:COMPLETE_EXACTNESS} there exist $c_* > 0$ and a bounded set 
$\Lambda \subset H \times \mathbb{R}^m$ such that for any $c \ge c_*$ one has
$S_c(\gamma) \subset \Omega_c(\gamma + \varepsilon) \times \Lambda$.
\end{lemma}

\begin{proof}
The fact that there exists $\widehat{c} > 0$ such that for any $c \ge \widehat{c}$ and all 
$(x, \lambda, \mu) \in S_c(\gamma)$ one has $x \in \Omega_c(\gamma + \varepsilon)$ follows directly from 
\cite[Lemma~3.2]{Dolgopolik_ExAugmLagr} and is proved in exactly the same way as in the proof of 
\cite[Thm.~3.5]{Dolgopolik_ExAugmLagr}.

Arguing by reductio ad absurdum, suppose that the claim of the lemma is false. Then for any $n \in \mathbb{N}$ there
exists $(x_n, \lambda_n, \mu_n) \in S_n(\gamma)$ such that $\| \lambda_n \| + |\mu_n| \ge n$. As was noted in the proof
of \cite[Thm.~3.5]{Dolgopolik_ExAugmLagr}, one has
\[
  \eta(x, \lambda, \mu) = Q(x)[\lambda, \mu] + \langle Q_{1, \lambda}(x), \lambda \rangle
  + \langle Q_{1, \mu}(x), \mu \rangle + Q_0(x)
\]
for some functions $Q_{1, \lambda}(x)$, $Q_{1, \mu}(x)$, and $Q_0(x)$ defined via the functions $f$, $g_i$, $F$, as
well as their first order derivatives. Therefore, by the second assumption of Theorem~\ref{THM:COMPLETE_EXACTNESS} there
exists $C > 0$ such that for any $n \ge r$ one has
\[
  \| Q_{1, \lambda}(x_n) \| \le C, \quad |Q_{1, \mu}(x)| \le C, \quad |Q_0(x_n)| \le C
\]
since $x_n \in \Omega_n(\gamma + \varepsilon) \subseteq \Omega_r(\gamma + \varepsilon)$. Consequently, by the third
assumption of Theorem~\ref{THM:COMPLETE_EXACTNESS} for any $n \ge r$ one has
\[
  \eta(x_n, \lambda_n, \mu_n) \ge \sigma \big( \| \lambda_n \|^2 + |\mu_n|^2 \big)
  - C \big( \| \lambda_n \| + |\mu_n| \big) - C.
\]
Hence with the use of \cite[Lemma~3.2]{Dolgopolik_ExAugmLagr} one gets that
\begin{align*}
  \mathscr{L}(x_n, \lambda_n, \mu_n, n) &\ge f(x_n) + \frac{n}{2} \phi(\| F(x_n) \|^2)
  \\
  &+ \frac{n}{2 \psi(0)} \big| \max\{ g(x_n), 0 \} \big|^2 - \frac{1}{2n \phi'(0)} 
  - \frac{1}{2}(1 + m) \frac{\psi(0)}{n}
  \\
  &+ \sigma \big( \| \lambda_n \|^2 + |\mu_n|^2 \big) - C \big( \| \lambda_n \| + |\mu_n| \big) - C
\end{align*}
for all $n \ge r$. Hence taking into account the facts that $x_n \in \Omega_n(\gamma + \varepsilon)$, the function $f$
is bounded on $\Omega_n(\gamma + \varepsilon)$ and $\| \lambda_n \| + |\mu_n| \ge n$ one can conclude that 
$\mathscr{L}(x_n, \lambda_n, \mu_n, n) \to + \infty$ as $n \to \infty$, which is impossible, since
$(x_n, \lambda_n, \mu_n) \in S_n(\gamma)$. 
\end{proof}

\begin{lemma} \label{lem:GradientInequal}
Under the assumptions of Theorem~\ref{THM:COMPLETE_EXACTNESS} for any $K > 0$ there exists $c_* > 0$ such that
\begin{equation} \label{eq:GradientInequal}
  \Big\| \nabla \mathscr{L}(x, \lambda, \mu, c) \Big\| \ge 
  K \left( \| F(x) \| + \left| \max\left\{ g(x), - \frac{p(x, \mu)}{c} \mu \right\} \right| \right)
\end{equation}
for all $c \ge c_*$ and $(x, \lambda, \mu) \in S_c(\gamma)$.
\end{lemma}

\begin{proof}
First note that the claim of the lemma coincides with the claim of \cite[Thm.~4.1]{Dolgopolik_ExAugmLagr} with the only
difference being the fact that in this lemma we do note assume that the set $V = \Omega_r(\gamma + \varepsilon)$ is
bounded. Our aim is to show that the assumption on the boundedness of this set is, in fact, redundant.

Indeed, the proof of \cite[Thm.~4.1]{Dolgopolik_ExAugmLagr} consists of three auxiliary lemmas
\cite[Lemmas~4.2--4.4]{Dolgopolik_ExAugmLagr}. The assumptions on the boundedness of the set $V$ is not used in the
proofs of \cite[Lemmas~4.2 and 4.3]{Dolgopolik_ExAugmLagr} (only the boundedness of $f$, $g_i$, and $F$ as well as their
first and second order derivatives on $V$ is used). Therefore, by \cite[Lemma~4.3]{Dolgopolik_ExAugmLagr} for any 
$K > 0$ and any bounded set $\Lambda \subset H \times \mathbb{R}^m$ there exists $c_* \ge r$ such that inequality
\eqref{eq:GradientInequal} holds true for all $c \ge c_*$ and 
$(x, \lambda, \mu) \in \Omega_c(\gamma + \varepsilon) \times \Lambda$. Hence, increasing $c_*$, if necessary, and
applying Lemma~\ref{lem:BoundedMultipliers} one obtains that inequality \eqref{eq:GradientInequal} holds true
for all $c \ge c_*$ and $(x, \lambda, \mu) \in S_c(\gamma)$.
\end{proof}

\begin{lemma}
Theorem~\ref{THM:COMPLETE_EXACTNESS} holds true.
\end{lemma}

\begin{proof}
Let $c_* > 0$ be from Lemma~\ref{lem:GradientInequal} with $K = 1$ and $n \in \mathbb{N}$ be such that $c_n \ge c_*$. 
By our assumption $\mathscr{L}(\cdot, c_n)$ attains a global minimum on $X \times H \times \mathbb{R}^m$ at some point
$(x_n, \lambda_n, \mu_n)$. By \cite[Prop.~3.1]{Dolgopolik_ExAugmLagr} the function $\mathscr{L}(\cdot, c_n)$ is
Fr\'{e}chet differentiable on its effective domain, which implies that 
$\nabla \mathscr{L}(x_n, \lambda_n, \mu_n, c_n) = 0$. Hence applying Lemma~\ref{lem:GradientInequal} one gets that the
point $x_n$ is feasible for the problem $(\mathcal{P})$ and
\begin{align*}
  \min_{(x, \lambda, \mu)} \mathscr{L}(x, \lambda, \mu, c_n) = \mathscr{L}(x_n, \lambda_n, \mu_n, c_n)
  = f(x_n) + \eta(x_n, \lambda_n, \mu_n) \ge f(x_n) \ge f_*,
\end{align*}
where $f_*$ is the optimal value of the problem $(\mathcal{P})$. Consequently, by
\cite[Lemma~5.2]{Dolgopolik_ExAugmLagr} the augmented Lagrangian $\mathscr{L}(x, \lambda, \mu, c)$ is globally exact,
that is, the first and second statements of Theorem~\ref{THM:COMPLETE_EXACTNESS} holds true.

The proof of the third and fourth statements of Theorem~\ref{THM:COMPLETE_EXACTNESS} coincides with the proof of 
\cite[Thm.~5.7]{Dolgopolik_ExAugmLagr} with the only difference being the fact that instead of
\cite[Thm.~4.1]{Dolgopolik_ExAugmLagr} one needs to apply Lemma~\ref{lem:GradientInequal}.
\end{proof}

\end{document}